\newtheorem{theorem}{Theorem}
\newcommand{\elrmat}[3]{#3 \in \mathbb{R}^{#1 \times #2}}
\newcommand{\idn}{\mathds{1}}
\newcommand{\pmat}[1]{\begin{pmatrix} #1 \end{pmatrix}}
\newcommand{\integ}[4]{\int_{#1}^{#2}#3 \text{d}#4}
\newcommand{\hil}[1]{\mathcal H_{#1}}
\newcommand{\nrm}[1]{\lVert {#1} \rVert}
\newcommand{\fig}{Figure~}
\def \foldpicpap {./}
\DeclareMathOperator*{\argmax}{argmax}
\begin{document}
\begin{frontmatter}

\title{Model order reduction of hyperbolic systems at the example of district heating networks}

\author[m1,m2]{Markus Rein\corref{me}}
\cortext[me]{Corresponding author}
\ead{markus.rein@itwm.fhg.de}

\author[m2]{Jan Mohring}
\author[m1]{Tobias Damm}
\author[m1]{Axel Klar}

\address[m1]{TU Kaiserslautern, Erwin-Schr{\"o}dinger-Stra{\ss}e 1,  67663 Kaiserslautern, Germany}
\address[m2]{Fraunhofer Institute for Industrial Mathematics ITWM, Fraunhofer-Platz 1, 67663 Kaiserslautern, Germany}

\begin{abstract}
In this article a framework for the generation of a computationally fast surrogate model for district heating networks is presented. An appropriate model results in an index-1 hyperbolic, differential algebraic equation quadratic in state, exhibiting several hundred of outputs to be approximated. We show the existence of a global energy matrix which fulfills the Lyapunov inequality ensuring stability of the reduced model. By considering algebraic variables as parameters to the dynamical transport, the reduction of a linear, time varying (LTV) problem results. We present a scheme to efficiently combine linear reductions to a global surrogate model using a greedy strategy in the frequency domain. The numerical effectiveness of the scheme is demonstrated at different, existing, large scale networks.\\
\end{abstract}

\begin{keyword}
energy networks, model order reduction, stability preservation, linear time varying system, Galerkin projection, network decomposition
\end{keyword}

\end{frontmatter}


\section{Introduction}\label{sec:int}
A central challenge of today's world is the efficient supply and use of energy. With the increasing share of renewable energies, energy supply and the dynamics of its transportation networks such as water, electricity and heating networks, became more volatile and diverse. Based on this development, guaranteeing a robust control and efficient use of energy requires precise modeling of the transportation processes. Model predictive control is a central part in efficiently seeking the potential of renewable energies. The corresponding mathematical task is challenging, since it involves large-scale dynamical networks subject to many possible scenarios \cite{hovland_explicit_2008}. These require multiple simulations of the dynamics over large time horizons. Hence the formulation of a numerically efficient and stable surrogate model is an important building block \cite{Benner2014PDE}.\\

Here port-Hamiltonian systems proved to be useful being formulated close to the underlying physical conservation laws \cite{van_der_schaft_port-hamiltonian_2013} while encorporating desired properties such as stability and passivity. Moreover it can be shown that these properties are passed to a reduced model obtained by Galerkin projections if the Hamiltonian energy matrix is included in the reduction process \cite{gugercin_interpolation-based_2009}.\\

We focus on the model order reduction of systems of hyperbolic differential equations \cite{leveque_numerical_2008} at the example of district heating networks. Such networks perform the transport of thermal energy from a centralized power plant to consumers using a network of pipelines. For each of the connected houses, a heat exchanger covers the time dependent power demand of customers by regulating the volume flow based on the currently available thermal energy. Due to its high flexibility towards the injection of different forms of energy, district heating has gained increasing importance for the supply with renewable energies \cite{rezaie_district_2012}. The reduction of hyperbolic systems is a challenging task, since the singular value decay of modes obtained from time-domain snapshots of their dynamics is known to be small. Furthermore the dynamics is non-linear and the state space is high dimensional \cite{sandou_predictive_2005}. In the following we present ideas how to tackle these difficulties in the generation of an efficient surrogate model.\\

The contribution is structured as follows. After presenting the differential algebraic equation (DAE) in section \ref{sec:model}, we formulate the problem setting in section \ref{sec_problem}. To allow for the application of linear reduction methods, section \ref{sec_param} depicts a parametric representation, where the dynamical energy transport is a linear time varying system, altered by the time varying flow variables, modeled as a parameter vector. For the upwind scheme, the construction of a global Lyapunov function with Kernel $Q$ is provided in section \ref{sec:stab} including a proof that it fulfills the corresponding Lyapunov inequality. Subsequently, we show how to systematically construct a global Galerkin projection for a given network topology in section \ref{sec_buildROM} and its effectiveness is demonstrated for two real world networks of large scale in section \ref{sec_numerics}.

\section{Model for district heating}\label{sec:model}
The transport of the energy density (total energy per unit volume) $\varphi$ within a pipeline is modeled by one-dimensional Euler equations. Since water in the liquid phase is the transport medium, the incompressible limit is assumed, simplifying the conservation of mass to $v_x = 0$. The remaining Euler equations for conservation of momentum and inner energy read

\begin{eqnarray}
0 &=& p_x + \frac{\lambda \rho}{2d}|v|v + \rho g h_x\label{eq_consmom}\\
\dot \varphi &=& -v \varphi_x - \frac{4k}{d}(T(\varphi)-T_e)\label{eq_consen}.
\end{eqnarray}

The change of pressure $p_x$ over a pipeline is modeled by frictional forces according to the Darcy-Weisbach equation, where $\lambda$ is a dimensionless friction factor, $d$ is the pipeline diameter, $v$ the advection velocity, and $\rho$ the density. The quantities $\rho$, and $\lambda$ are assumed to be constants within this contribution. Gravitational forces are captured by the height difference $h_x$, and the gravitational constant $g$. For the typical dynamics of heating networks, acceleration is small compared to friction and gravitation, which is why it is neglected in (\ref{eq_consmom}), $\dot v = 0$. This makes (\ref{eq_consmom}) an algebraic equation after integration over the pipeline length. The advection of the energy density $\varphi$ in (\ref{eq_consen}) incorporates an additional sink term due to conduction of heat with transfer coefficient $k$ to the environment with temperature $T_e$.\\

To allow for a numerical treatment of the partial differential equation (PDE), we perform a spatial discretization of (\ref{eq_consen}) employing the upwind scheme yielding a total number of $n$ finite volume cells. With abuse of notation, we denote both the PDE variable and the vector of finite volume cells by $\varphi$. For the description of the network, we introduce the set $\mathcal{E}$ containing $E$ edges which represent all pipelines. More specifically, pipeline $i\in \mathcal{E}$ contains the local set of cells $\mathcal{N}_i=[1,..,n_i]$ with cardinal number $n_i$. In the following the resulting system of ordinary differential equations is considered. To connect incoming and outgoing pipelines within the network, additional algebraic constraints at the junctions have to be posed. A prominent choice is the conservation of energy over node $N$ yielding

\begin{eqnarray}\label{eq_encons}
\sum_{i \in N^-} q_i \varphi_{i,1} = \sum_{j \in N^+} q_j \varphi_{j,n_j},
\end{eqnarray}

where $q_i = \Phi_i v_i$ is the volume flow on pipeline $i$ formed by its cross section $\Phi_i$ and velocity $v_i$. $N^-$ and $N^+$ denote edges exiting and entering node $N$.  By instantaneous mixing of energy flows within $N$, the energy density is identical for all outgoing pipes, $\varphi_{i,1} = \varphi^N, \forall i \in N_+$. Here $\varphi_{e,c}$ is the finite volume cell $c$ of edge $e$ in flow direction. Hence, the energy density $\varphi^N$ adjacent to pipe $i$ is given by

\begin{eqnarray}\label{eq_phi_out}
\varphi^N= \frac{\sum_{j \in N_+} \Phi_j v_j \varphi_{j,n_j}}{\sum_{i \in N_-} \Phi_i v_i}.
\end{eqnarray}

Similarly, volume conservation over node $N$ is assumed, yielding

\begin{eqnarray}\label{eq_cons_vol}
\sum_{j \in N_+} \Phi_j v_j = \sum_{k \in N_-} \Phi_k v_k.
\end{eqnarray}

This allows to write the network dynamics as  

\begin{eqnarray}
\dot \varphi &=& A(v) \varphi + B(v) u_T(t),  \label{eq_sysadvec}\\
y &=& C \varphi \label{eq_sysout}, \\
0 &=& v - K q\label{eq_K1},\\
0 &=& G [v_i \cdot |v_i|]_{i \in \mathcal{E}}\label{eq_K2},\\
0 &=& [q_i \cdot y_i]_{i \in \mathcal{H}} - u_H(t), \label{eq_hous}
\end{eqnarray}

where (\ref{eq_sysadvec}) mirrors the advection of the energy density subject to the input energy density $u_T(t)$, with $\elrmat{n}{1}{B(v)}$. The energy densities relevant for the algebraic equations are measured by the operator $\elrmat{o}{n}{C}$. Both the upwind scheme and the conservation of energy are encoded in the velocity dependent matrix $\elrmat{n}{n}{A(v)}$. Eq. (\ref{eq_K1}) uses the solution $\elrmat{E}{L}{K}$ of (\ref{eq_cons_vol}), to describe the pipeline velocities by $L$ independent volume flows $q$. Kirchhoff's circuit law presented in (\ref{eq_K2}) claims that the sum of pressure differences over a loop within the network equals 0, where $\elrmat{(L-H)}{E^2}{G}$. Finally in (\ref{eq_hous}), the demanded power consumption $u_H$ is provided by energy density and volume flow at houses $\mathcal{H}$ with cardinal number $H$. Consequently the velocity changes dynamically with the energy density at the houses and their time dependent consumption $u_H(t)$. For the rest of this contribution, we abbreviate the algebraic equations (\ref{eq_K1}) - (\ref{eq_hous}), by the nonlinear system of equations,

\begin{align}\label{eq_algeb}
0 = g(q,y,u_H).
\end{align}

\section{Problem formulation and reduction approaches}\label{sec_problem}
We seek to construct a global, stable, surrogate model of the hyperbolic, quadratic in state index-1 DAE (\ref{eq_sysadvec} - \ref{eq_hous}), which after offline generation, can be reused for all admissible inputs $\mathcal{U_T}$. Consequently, the time to generate the surrogate model is negligible. Note that the surrogate model is generated for an ODE system with a given number of finite volume cells, and not for the PDE system itself. The information necessary to construct the surrogate model is the admissible control $\mathcal{U_T}$, the expected consumption $u_H$, the topology of the network, and the approximation error $\Delta^t$,

\begin{align}\label{eq_err_time}
\Delta^t = \max_{h \in \mathcal{H}}\frac{\nrm{y_h(\cdot)- y_h^r(\cdot)}_2}{\nrm{y_h(\cdot)}_2},
\end{align}

of the simulation in the time domain $I$. Using the maximum relative $l_2$ error ensures that each output is approximated precisely, regardless of its absolute value and other outputs. The reference solution $y$ is obtained by an estimate of the PDE solution as described in section \ref{sec_numerics}. The requirements to the admissible input signals arise from technical restrictions and are summarized in the following space

\begin{align}
\mathcal{U_T} =\{u(t) = c_0  + \sum_{i=1}^{m} c_i \cos(i \omega t + \beta_i), m \omega \leq \hat \omega, \; |\dot u(t)| \leq u_d, \; u(t) \in [u_l,u_h] \; \forall t \in I \}.
\end{align}

The control $u \in \mathcal{U_T}$ is periodic with maximal contributing frequency $\hat \omega$, its absolute temporal derivative is bounded by $u_d$, and its image varies within the interval $[u_l,u_h]$. Note that for the temporal variation of the input energy also the initial state is important which we denote by

\begin{align}
\varphi_0 = \varphi(x,t=0).
\end{align}

Discussing possible reduction approaches \cite{antoul_approx}, balanced truncation proved to be beneficial for linear systems, and is currently even capable of balancing large scale systems in the order of $10^{6}$ states. Since the given problem highly depends on the volume flow field determining the transport on every pipeline, different linear working points will influence the dynamical simulation. As a consequence, focusing on a single linear model is not promising. Dynamic mode decomposition (DMD)\cite{kutz_dynamic_2014}, and its extension to control systems (DMDc)\cite{proctor_dynamic_2016}, aim at approximating a (non-)linear dynamical system by a linear system. In contrast to projection based reductions, the resulting linear operator $A_r$ is constructed by the identified modes and thus is constant in time for the later simulation.
Other techniques calculate a dictionary of solutions in the offline phase and weight them to obtain a minimal $L_1$ error \cite{abgrall_robust_2016}. Reduced basis methods \cite{haasdonk_reduced_2008} aim at approximating the input-to-state map making use of the Proper orthogonal decomposition (POD) technique. The latter bases on snapshots of the system state for time-domain simulations to generate a low dimensional subspace in which the relevant dynamics evolve. For advection dominated phenomena, it is known that the singular value decay of the corresponding snapshots is slow, resulting in large surrogate models. Different approaches were proposed to overcome this problem by shifting the transport dynamics to a common reference frame using the velocity information of the flow field \cite{rim_transport_2018,reiss_shifted_2018,ohlberger_nonlinear_2013}. For the context of energy networks, it suffices to capture the relevant outputs at distinct points in the network. As a consequence, we aim at reproducing the input-to-output map. The reduction of quadratic bilinear differential algebraic equations (QBDAE) system has been addressed in different works \cite{ahmad_moment-matching_2017,benner2018mathcalh_2}. In contrast to these approaches, we want to preserve the algebraic state space variables to guarantee Lyapunov stability in the reduced models. Trajectory piecewise-linear models (TPWL) \cite{rewienski_model_2006} linearize the system dynamics at relevant points in the state space, and reduce the resulting linear systems by projection based methods. In the dynamical simulation, the local reductions are combined according to their distance to the current system state, using an appropriate weighting function. The linearization of the velocity field appearing in the algebraic equations can lead to numerically unstable systems. As a consequence, we keep the product of volume flows $q$ and energy densities and consider $q$ as a parameter set to the dynamical transport. The resulting parameterized, linear time varying system (LTV) is reduced using interpolation based tools from linear model order reduction.


\section{Parameterized description}\label{sec_param}
Using a first order finite volume discretization such as an upwind scheme, the resulting number of differential, energy related state variables $\varphi$ is large compared to the algebraic flow variables $q$. Furthermore, different reduction approaches on differential algebraic systems proved that it is advisable to preserve the algebraic equations when performing model reduction \cite{egger_structure-preserving_2018}. Section \ref{sec:stab} confirms that also in our case, not reducing the algebraic equations is sufficient for stability of the discretized model. As a  consequence, we split the thermal transport from the hydraulic problem and focus on the reduction of the dynamical part. To this end, throughout the rest of the contribution we consider the thermal transport as a parameterized problem,

\begin{equation}\label{eq_syspar}
\begin{aligned}
\dot \varphi &= A(q) \varphi + B_T(q) u_T(t),\\
y &= C \varphi,\\
0 &= g(q,y,u_H).
\end{aligned}
\end{equation}

To reduce the parameter space, the volume flow field $q$ is used as a parameter instead of the velocity field $v$ in (\ref{eq_sysadvec}). The algebraic equations $g$ form a generator system for the parameter trajectory $q(t)$ of the reduced system. The parametric representation of $A,B$ resulting for  the differential system (\ref{eq_sysadvec}) takes the form

\begin{align}\label{param_dep_A}
A(q) &= \sum_{i=1}^{n_f} f_i(q) A_i,\\
B(q) &= \sum_{i=1}^{n_f} f_i(q) B_i,
\end{align}

with $f: \mathbb{R}^{n_q} \rightarrow \mathbb{R}^{n_f}$. This affine representation of the parameter influence allows to compute the reduced system matrices $A_i^r, B_i^r$ in the offline phase and apply the $q$ dependent weights online. Typical heating networks solely contain junctions with two or three coupling pipelines. In this case $f_i(q)=q_i\; \forall i \in \mathcal{H,L}$. Thus the only nonlinear function in the volume flows is introduced by changes in flux direction, and it holds $n_f \approx n_q$.\\

Similar to rational interpolation approaches for the reduction of parameterized linear systems, we consider the parameterized transfer function

\begin{align}\label{eq_localtf}
H(s,q) = C (s \idn - A(q))^{-1} B(q),
\end{align}

as the central object to construct reduced models. We note that (\ref{eq_localtf}) is only well defined for $q$ fixed in time, which is not the case in our dynamical simulation.

\section{Lyapunov stability of the discretized model}\label{sec:stab}
Stability is a key property desired for the ROM. It can be obtained from the FOM if the corresponding energy matrix $Q$ is used in a state space transformation before applying a Galerkin projection \cite{gugercin_interpolation-based_2009}. Hence, it suffices to construct $Q$ for the full order model to conclude for Lyapunov stability of full and reduced model. This is discussed for the upwind scheme in the following section by analyzing its structure. The corresponding discretization of the PDE (\ref{eq_consen}) results in two possible types of cell coupling on a network of pipelines and junctions, compare \fig\ref{fig_illnetwork}. These are coupling with neighboring cells in the pipeline or with border cells coupling to incoming pipelines at junctions,
 
\begin{equation}\label{eq_coupl_typ}
\begin{aligned}
\dot \varphi_{i,j} &= -\frac{v_i}{h_i}(\varphi_{i,j}-\varphi_{i,j-1}),\quad j \in \mathcal{N}_i\\
\dot \varphi_{i,1} &= -\frac{v_i}{h_i}(\varphi_{i,1}-\varphi^{N}(\{\varphi_{j,n_j}|j \in E_i^+\})).
\end{aligned}
\end{equation}

Here $\varphi_{i,j}$ is the finite volume cell $j$ in flow direction of edge $i$. $E_i^+$($E_i^-$) denote the set of edges which enter(exit) the node which edge $i$ couples to.  For a fixed velocity field $v=\bar v$, $A$ is considered as the Jacobian of the ODE system (\ref{eq_sysadvec}),

\begin{eqnarray}
A_{f(i,j),f(k,l)}(\bar v) \equiv \frac{\partial \dot \varphi_{i,j}}{\partial \varphi_{k,l}}(\bar v).
\end{eqnarray}

Rows and columns of the matrix $A$ are mapped to the edge- and cell indices $i,j$ by the ordering function

\begin{eqnarray}
f(e,c) \equiv c + \sum_{k=1}^{e-1} n_k, \quad e \in \mathcal{E}, \quad  c \in \mathcal{N}_e.
\end{eqnarray}

These definitions allow to state the following theorem.\\

\begin{minipage}{.8\textwidth}
\centering
\begin{tikzpicture}[scale=1,transform shape]
\draw[fill] (0,0) circle (0.1cm);
\draw[-](0,0)--node[below]{$\varphi_{i,1}$} (1,0);
\draw[fill] (1,0) circle (0.05cm);
\draw[-](1,0)--node[below]{$\varphi_{i,2}$} (2,0);
\draw[fill] (2,0) circle (0.05cm);
\draw[-] (2,0)--node[below]{$\varphi_{i,3}$} (3,0);
\draw[fill] (3,0) circle (0.05cm);
\draw[dashed](3,0)--(5,0);
\draw[fill] (5,0) circle (0.05cm);
\draw[-](5,0)--node[below]{$\varphi_{i,n_i}$} (6,0);
\draw[fill] (6,0) circle (0.1cm);
\draw[-](6,0)--node[above, rotate=45]{$\varphi_{k_1,1}$} (6.7,0.7);
\draw[fill] (6.7,0.7) circle (0.05cm);
\draw[dashed]((6.7,0.7)--(7.5,1.5);
\draw[-](6,0)-- (6.7,-0.7);
\draw[fill] (6.7,-0.7) circle (0.05cm);
\draw[dashed]((6.7,-0.7)--(7.5,-1.5);
\draw[-](-.7,.7)--node[above, rotate=-45]{$\varphi_{j_1,n_j}$} (0,0);
\draw[fill] (-.7,.7) circle (0.05cm);
\draw[dashed]((-1.5,1.5)--(-.7,.7);
\draw[-](-.7,-.7)--(0,0);
\draw[fill] (-.7,-.7) circle (0.05cm);
\draw[dashed]((-1.5,-1.5)--(-.7,-.7);
\draw(-.5,.1) node{$\vdots$};
\draw(-1,0) node {$E_i^+$};
\draw(6.5,0.1) node {$\vdots$};
\draw(7,0) node {$E_i^-$};
\end{tikzpicture}

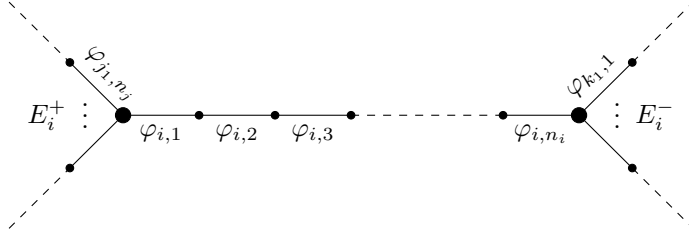
\captionof{figure}{Illustration of the spatial discretization of the PDE using the upwind scheme.\\}
\label{fig_illnetwork}
\end{minipage}

\begin{theorem}\label{theo:stabil}
There exists a global, diagonal, positive definite energy matrix $\elrmat{n}{n}{Q}$, such that for every fixed  set of volume flows $q$ satisfying volume conservation (\ref{eq_cons_vol}), 

\begin{eqnarray}\label{eq:M}
M = (QA(q))^T + (QA(q)) \leq 0.
\end{eqnarray}

Remark 1: $Q$ can be constructed with positive diagonal elements $Q_i \equiv \textit{diag}(Q) =  Q_{f(i,j),f(i,j)} = \Phi_i h_i, \; i \in \mathcal{E}, \; j \in \mathcal{N}_i$. The latter carry the volume $\Phi_i h_i$ of each of the discretization cells on edge $i$.

Remark 2: Since (\ref{eq:M}) holds for all volume flows $q$ satisfying volume conservation (\ref{eq_cons_vol}), $V(\varphi) = \varphi^T Q \varphi$ forms a Lyapunov function for the time-continuous process \cite{antoul_approx}.

Remark 3: Note that also a change in the flux direction which yields a structural modification in the system matrix $A(q)$ leads to a stable system by transformation with $Q$.

\end{theorem}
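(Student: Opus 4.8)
The plan is to verify the Lyapunov inequality (\ref{eq:M}) directly through the associated quadratic form rather than through the spectrum of $M$. Since $Q$ is diagonal, $M = (QA(q))^T + (QA(q))$ is symmetric and $\varphi^T M \varphi = 2\,\varphi^T Q A(q)\varphi$ for every $\varphi$, so it suffices to show $\varphi^T Q A(q)\varphi \le 0$ for all $\varphi\in\mathbb{R}^n$. This quantity is exactly $\tfrac12 \tfrac{d}{dt}(\varphi^T Q \varphi)$, the rate of change of the discrete energy $V(\varphi)=\varphi^T Q\varphi$, which is why the volume weights $Q_i = \Phi_i h_i$ of Remark~1 are the natural choice: multiplying the cell rate $v_i/h_i$ in (\ref{eq_coupl_typ}) by $\Phi_i h_i$ turns every coupling coefficient into the volume flow $q_i = \Phi_i v_i$, the quantity that is balanced across junctions by volume conservation (\ref{eq_cons_vol}).

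First I would write the quadratic form as a sum over the transport links in (\ref{eq_coupl_typ}). Each cell $b=(i,j)$ with upstream value $\varphi_a$ (its predecessor $\varphi_{i,j-1}$ in the interior, or the mixed density $\varphi^N$ from (\ref{eq_phi_out}) at a border cell) contributes $q_i\,\varphi_b(\varphi_a-\varphi_b)$. I would then apply the elementary identity
\[
q_i\,\varphi_b(\varphi_a-\varphi_b) = \tfrac{q_i}{2}\big(\varphi_a^2-\varphi_b^2\big) - \tfrac{q_i}{2}(\varphi_a-\varphi_b)^2,
\]
splitting the form into a flux part $S_1$ (the telescoping differences) and a manifestly dissipative part $-S_2$ with $S_2=\sum \tfrac{q_i}{2}(\varphi_a-\varphi_b)^2\ge 0$, using that $q_i>0$ in the flow direction. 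Within a single edge the flux terms telescope to the inlet-minus-outlet expression $\tfrac{q_i}{2}(\varphi_{\text{in}}^2-\varphi_{\text{out}}^2)$, so $S_1$ reduces to a sum of contributions localized at the junctions, together with boundary terms: inlet terms at source edges vanish because that input enters through $B(q)$ and not through $A(q)$, while outlet terms at consumer edges are $-\tfrac{q_i}{2}\varphi_{\text{out}}^2\le 0$.

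The crux is then the contribution of an internal node $N$. Collecting the inlet terms of its outgoing edges and the outlet terms of its incoming edges, and writing $Q_N=\sum_{j\in N^+}q_j=\sum_{i\in N^-}q_i$ (this equality is precisely volume conservation (\ref{eq_cons_vol})), the node contributes
\[
\tfrac{Q_N}{2}\,(\varphi^N)^2 - \tfrac12\sum_{j\in N^+} q_j\,\varphi_{j,n_j}^2 .
\]
Substituting the mixing rule $\varphi^N=\big(\sum_{j\in N^+}q_j\varphi_{j,n_j}\big)/Q_N$ from (\ref{eq_phi_out}), the first term becomes $\big(\sum_{j\in N^+}q_j\varphi_{j,n_j}\big)^2/(2Q_N)$, and the Cauchy--Schwarz inequality $\big(\sum_{j\in N^+} q_j\varphi_{j,n_j}\big)^2\le Q_N\sum_{j\in N^+} q_j\varphi_{j,n_j}^2$ shows this node term is $\le 0$. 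Hence $S_1\le 0$, so $\varphi^T QA(q)\varphi = S_1-S_2\le 0$ and $M\le 0$.

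I expect the main obstacle to be the bookkeeping at the junctions: correctly matching the single outgoing mixture $\varphi^N$ against the several incoming outlets and recognizing that the resulting quadratic is controlled by exactly one Cauchy--Schwarz step, which is the discrete expression of the fact that instantaneous mixing cannot increase the $Q$-weighted energy. The positivity $q_i>0$ and the conservation identity $\sum_{j\in N^+}q_j=\sum_{i\in N^-}q_i$ are the two structural facts that make the internal flux terms cancel and fix the direction of the inequality; since the volume weights $Q_i=\Phi_i h_i$ do not depend on the sign of $v_i$, the same telescoping and Cauchy--Schwarz estimate survive a change of flow direction, which is the content of Remark~3. (Equivalently, one could phrase the conclusion as a diagonal-dominance statement for the symmetric matrix $M$, whose diagonal entries $-2q_i$ dominate the nonnegative off-diagonal row and column sums; I prefer the energy form since it exposes the Lyapunov interpretation of Remark~2.)
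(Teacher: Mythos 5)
Your proof is correct, and it takes a genuinely different route from the paper's. The paper argues entrywise: after the transformation $A' = QA(q)$ it tabulates all entries ($-q_i$ on the diagonal, $q_i$ on the within-pipe subdiagonal, and $q_i q_j/\sum_{k\in E_i^-}q_k$ for the junction couplings), observes that the diagonal of $M$ is $-2q_i\le 0$, verifies weak row- and column-diagonal dominance --- the only nontrivial rows being those of inflow/outflow boundary cells, where volume conservation (\ref{eq_cons_vol}) makes the off-diagonal sums equal to $q_i$ --- and then concludes $M\le 0$ from Horn--Johnson (Gershgorin-type) theorems. You instead bound the quadratic form $\varphi^T QA(q)\varphi$ directly, via the discrete analogue of the $L^2$ energy estimate for the upwind scheme: the split into telescoping flux terms plus dissipative squares, the reduction of the flux sum to junction contributions, and one Cauchy--Schwarz step per node applied to the mixing rule (\ref{eq_phi_out}). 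Both proofs rest on the same two structural facts --- nonnegativity of the $q_i$ after orienting edges along the flow, and conservation (\ref{eq_cons_vol}) at nodes --- and both invoke conservation at exactly the same place, the junction terms. What your version buys: an explicit dissipation term $-S_2$ quantifying the energy decay rather than just its sign, and the physical reading that instantaneous mixing cannot increase the $Q$-weighted energy; it would also survive in situations where diagonal dominance fails but the quadratic form can still be estimated. What the paper's version buys: a short, purely linear-algebraic check once the entries of $A'$ are written down, with no bookkeeping of telescoping sums; the flux-reversal robustness of Remark 3 is equally immediate in both (your weights $\Phi_i h_i$ and the paper's row-sum bounds are insensitive to the sign of $v_i$). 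Two cosmetic points: $M$ is symmetric by construction as $X+X^T$, not because $Q$ is diagonal; and at a node with vanishing throughflow the mixing rule (\ref{eq_phi_out}) itself degenerates --- an edge case inherent to the model, which the paper's proof also does not treat.
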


\begin{proof}\label{proof:lyap}
For the following considerations, we choose the orientation of each edge velocity such that $v_i \geq 0 \;\forall i \in \mathcal{E}$. Hence, velocities and the corresponding volume flows are non-negative. Proving that the symmetric matrix $M$ is negative semi-definite amounts to show that $M$ has non-positive diagonal elements, and is weak (row- and column) diagonally dominant,

\begin{eqnarray}
\sum_{j=1,\; i\neq j}^{n} |M_{ij}| \leq |M_{jj}|.
\end{eqnarray}

With these properties, \cite{horn_matrix_2017} Theorem 6.1.1 allows to conclude that all eigenvalues of $M$ are non-positive and by \cite{horn_matrix_2017} Theorem 4.1.8 a hermitian matrix with non-positive eigenvalues is negative semi-definite.\\

Based on the coupling types (\ref{eq_coupl_typ}), we obtain for $A' \equiv Q A(q)$, $\forall i \in \mathcal{E}$

\begin{eqnarray}
A'_{f(i,j),f(i,j)} &=& -Q_i\frac{v_i}{h_i} = -q_i \label{eq_Adiag}  \\
A'_{f(i,j),f(i,j-1)} &=& Q_i\frac{v_i}{h_i} = q_i, \quad \forall j \in [2,...,n_i] \\
A'_{f(i,1),f(j,n_j)} &=& a'_{ij}=  Q_i\frac{v_i}{h_i}\frac{\Phi_j v_j}{\sum_{k \in E^-_i} \Phi_k v_k} = q_i \frac{q_j}{\sum_{k \in E^-_i} q_k} , \quad \forall j \in E^+_i. \label{eq_junc_connec}
\end{eqnarray}

The quantity $\varphi^N$ leaving node $N$ is replaced by energy conservation (\ref{eq_cons_vol}) to derive the coupling type in (\ref{eq_junc_connec}). The structure of $A'$ is visualized in Figure~\ref{fig_amat}. From (\ref{eq_Adiag}) one easily concludes that the diagonal elements of $M = A' + A'^T$ are non-positive, $M_{f(i,j),f(i,j)} = -2q_i \leq 0, \; \forall i \in \mathcal{E}, \; \forall j \in \mathcal{N}_j$.

\begin{figure}
\centering
\includegraphics[scale=0.75]{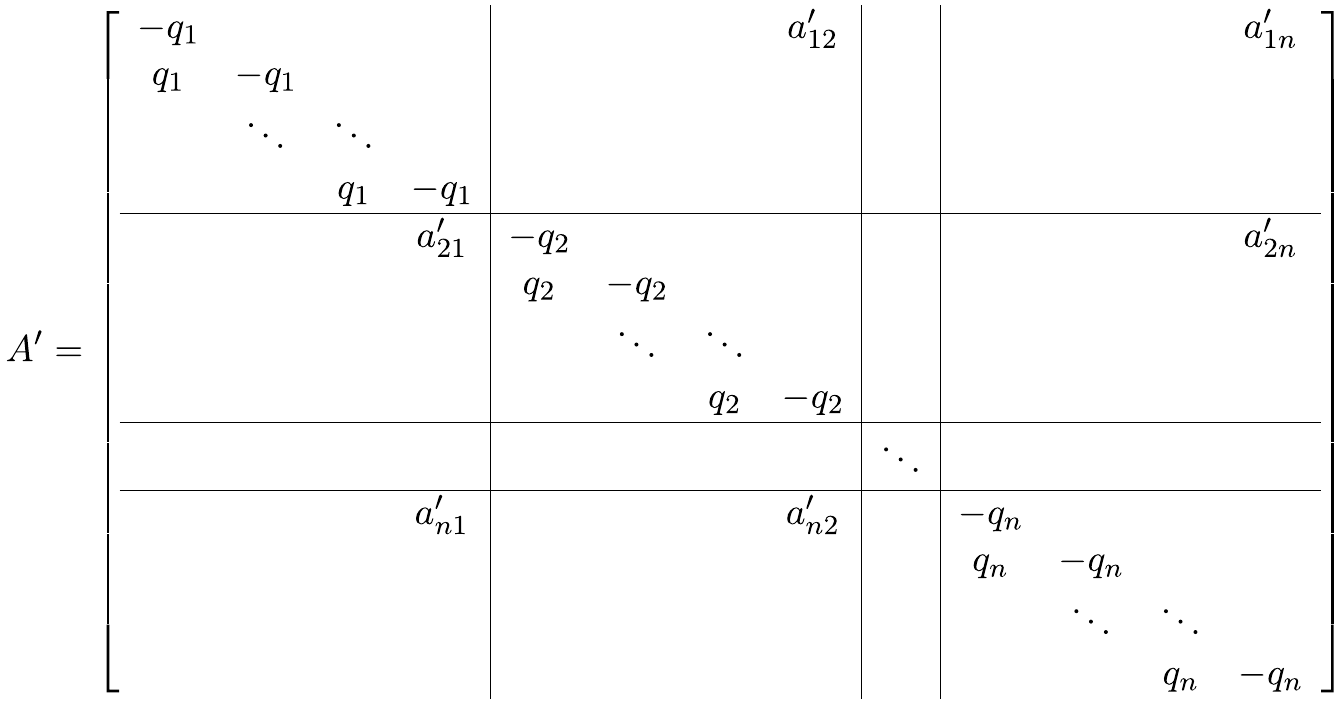}
\caption{Structure of the modified system matrix $A'$ resulting from discretization with the upwind scheme. Off-diagonal elements $a'$ result from incoming and outgoing flows coupling at nodes of the network.}
\label{fig_amat}
\end{figure}

To show that $M$ is weak diagonally row dominant, we start by considering rows which describe inner energy densities in a pipeline. This trivially leads to weak diagonal dominance for these rows, 

\begin{eqnarray}
\sum_{k\neq f(i,j)} |M_{f(i,j),k}| &=& \sum_{k\neq f(i,j)} |A'_{f(i,j),k} + A'_{k,f(i,j)}|\\
&=&  2q_i.
\end{eqnarray}

Focusing on rows describing cells at the inflow boundary of an arbitrary edge $i \in \mathcal{E}$ yields the following off-diagonal elements

\begin{eqnarray}
\sum_{k\neq f(i,1)} |M_{f(i,1),k}| & =& \sum_{k\neq f(i,1)} |A'_{k,f(i,1)} + A'_{f(i,1),k} | \\
 & =& |A'_{f(i,2),f(i,1)}| + \sum_{j \in E^+_i} |A'_{f(i,1),f(j,n_{j})}|\\
 & =&  q_i + \sum_{j \in E^+_i}a_{ij}
\end{eqnarray}

Finally we calculate the sum of off-diagonal cells at the outflow boundary,

\begin{eqnarray}
\sum_{k\neq f(i,n_i)} |M_{f(i,n_i),k}| &=& \sum_{k\neq f(i,n_i)} |A'_{f(i,n_i),k} + A'_{k,f(i,n_i)}|\\
&=& |A'_{f(i,n_i),f(i,n_i-1)}| + \sum_{e \in E^-_i} |A'_{f(e,1),f(i,n_i)}|\\
&=& q_i + \sum_{e \in E^-_i} a_{ei}\label{eq_Tend}.
\end{eqnarray} 

Hence, it remains to verify the two inequalities
\begin{align*}
q_i&\ge \sum_{j \in E^+_i}a_{ij}=q_i\frac{\sum_{j\in
                       E_i^+}q_j}{\sum_{k\in E_i^-}q_k}\quad\text{ and }\quad
                 q_i\ge \sum_{e \in E^-_i}a_{ei}=q_i\frac{\sum_{e\in E_i^-}q_e}{\sum_{k\in E_i^-}q_k}=q_i\;.
                \end{align*}
The second is clear, while the first follows from the volume conservation property (5).

\end{proof}

The natural question whether the discretized system (\ref{eq_syspar}) also is asymptotically stable will be investigated in a future work and is not within the scope of this contribution. So far, it can be stated that changes in the flux direction can cause a transport velocity of zero at distinct edges. This leads to a loss of rank of the operator $A(q)$. A detailed analysis of the algebraic equations (\ref{eq_algeb}) will give insight to the question whether this singularity is persistent and allows to decline asymptotic stability.

\section{Generation of a surrogate model}\label{sec_buildROM}
In this section we focus on the generation of a Galerkin projection $V \in \mathbb{R}^{n \times r}$, which forms the desired surrogate model. As shown in \cite{gugercin_interpolation-based_2009,polyuga_structure_2010,gugercin_structure-preserving_2012}, a Lyapunov stable system can be transformed to scaled co-energy coordinates, $e = L^{-1} \varphi$, where $L^T Q L = \idn$, to obtain a representation in which $Q^e = \idn$,

\begin{align}\label{eq_dyn_coen}
\dot e &= A^e(q) e  + B^e(q) u_T(t)\\
y &= C^e e.
\end{align}

One easily checks that $A^e + (A^{e})^T = L^T(Q A + A^T Q)L\leq 0$ itself fulfills the Lyapunov inequality. Hence, every Galerkin projection $V$ applied to (\ref{eq_dyn_coen}) produces a globally stable ROM ($V^T(A^e + (A^e)^T)V \leq 0$), without the need to imply $Q$ in the reduction process. As shown in section \ref{sec:stab}, $Q$ is diagonal and thus $L = \text{diag}(\sqrt{q_1}^{-1},...,\sqrt{q_n}^{-1}) $. The reduced order model resulting from an arbitrary Galerkin projection is given by  

\begin{equation}\label{eq_sysparred}
\begin{aligned}
\dot e^r &= V^TA^{e,r}(q)V e^r +V^T B^{e,r}(q) u_T(t),\\
y^r &= C^{e,r}V e^r\\
0 &= g(q,y^r,u_H).
\end{aligned}
\end{equation}

Thus we seek a global subspace in which the relevant dynamics of the nonlinear system (\ref{eq_syspar}) evolve, ensuring $y^r \approx y$, $\forall u_T \in \mathcal{U_T}$.\\

Similar to the ideas discussed for parameterized systems in \cite{baur_interpolatory_2011,benner_survey_2015}, we focus on the reduction of the linear time varying problem (LTV) (\ref{eq_dyn_coen}), which splits in two sub-tasks. The first is the choice of a robust set of representative massflows $\mathcal{S}$ from the high dimensional parameter space at which the reduction is performed. The second task is the connection of their corresponding reductions in the dynamical surrogate model. Concerning the choice of the set $\mathcal{S}$, snapshots from a worst case training signal $u_{tr}$ are generated, where the corresponding signal changes with maximal frequency and explores the entire allowed domain,

\begin{align}
u_{tr} \in \mathcal{U}_T:\; \min_t(u_{tr}(t)) = u_l, \; \max_t(u_{tr}(t)) = u_h, \; m\cdot \omega=\hat \omega.
\end{align}

Subsequently, a greedy strategy is applied in frequency space to determine the linearizations $\mathcal{S}$ at which the reduction is performed. Based on all linearizations relevant for the error of possible dynamical simulations $\mathcal{D}$, the linearization with the largest error is added to $\mathcal{S}\subset \mathcal{D}$ iteratively. As an error measure for the approximation quality of the ROM to a linear model, we use a weighted $\hil 2$-norm over the relevant frequency range $W \subset \mathbb{R^+} $ \cite{breiten_near-optimal_2015},

\begin{align}
\nrm{H}^2_{\hil 2(W)} = \frac{1}{2\pi}\integ{W}{}{\nrm{H(i\omega)}_F^2}{\omega}.
\end{align}

The interval of admissible frequencies $W = [W_l,W_h$] is defined by the maximum frequency of the input signal $W_h = \hat f(\omega)$, and a minimal frequency $W_l$ at which the transfer function converges.\\ 

Concerning the second task of connecting local reductions, one approach is to transform dynamically between their subspaces when performing the simulations. Ensuring that the local transformations are efficient and preserve the stability of the reduced model is a nontrivial task since the corresponding parameters are time dependent. The approach used in the following, is to derive local Galerkin projections $V_i$ and combine them to a global projection matrix $V$. Hence, the resulting surrogate model evolves in a global subspace, ensuring stability. Every local projection corresponds to the linear system defined by the volume flow $q_i \in \mathcal{D}$ fixed in time. Combining local projections to a global one includes the risk of creating a prohibitively large reduced system. Here, Krylov based reduction methods proved to be frugal with regard to the resulting reduced order. Consequently, we use a modified version of IRKA \cite{gugercin_mathcalh_2_2008} for the generation of the local reduction. In its original form, IRKA fulfills necessary conditions for a $\hil 2$ optimal interpolation of the full transfer function of the system. Based on initial interpolation points $\sigma$ and directions $b,c$ in the frequency space, it uses a fixed point iteration to ensure that the reduced model interpolates the transfer function $H$ at the mirror images of the reduced poles. Since we address a weighted error norm, convergence of the interpolation points does not fulfill necessary optimality conditions with respect to the modified norm $\nrm{H}^2_{\hil 2(W)}$ \cite{breiten_near-optimal_2015}. For this reason, we only perform $n \in \mathbb{N}$ iterations of IRKA and focus on the variation of the initial interpolation points $\sigma$. This leads to a local transfer function with error $\delta_{q^i} < \bar \delta$, where

\begin{align}
\delta_q = \frac{\nrm{H(q)  - H_r(q)}_{\hil 2(W)}}{\nrm{H(q)}_{\hil 2 (W)}}.
\end{align}

The local reduction strategy is summarized in algorithm \ref{alg:modIRK}.

\begin{algorithm}
\caption{Weighted IRKA}
\label{alg:modIRK}
\begin{algorithmic}
\REQUIRE error bound $\bar \delta$, system matrices $A,B,C$, initial interpolation frequencies $\sigma = \{\sigma_1,...,\sigma_r\}$, iteration limit $N$
\ENSURE local Galerkin projection $V$
\WHILE{$\delta_q \geq \bar \delta$}
\STATE initialize $b_i$ by most dominant singular vector of $H(\sigma_i)$ $\forall i \in [1..r]$
\FOR{i=1 to $N$}
\STATE{$R(\sigma,b) = [(\sigma_1 \idn -A)^{-1}Bb_1,...,(\sigma_{r} \idn -A)^{-1}Bb_{r}]$}
\STATE{$V = \text{orth}\{R(\sigma,b)\}$}
\STATE $\tilde A \leftarrow V^T A V$, $\tilde  B \leftarrow V^T B $, $\tilde  C \leftarrow C V$
\STATE{determine error of current ROM, $\delta(i)$}
\STATE{determine eigenvalues and left eigenvectors, $y_i^* \tilde A = \lambda_i y_i^*$}
\STATE{update $\sigma_i \leftarrow -\lambda_i$, and  $b_i^T \leftarrow 	y_i^* \tilde B$}
\ENDFOR
\STATE{$\delta_q = \min_{i \in [1..N]}{\delta(i)}$}
\STATE{increase number of interpolation points $\sigma$ in relevant range }
\ENDWHILE
\RETURN{V}
\end{algorithmic}
\end{algorithm}

Based on the generation of a local reduction, a global projection matrix is determined by a greedy based selection of parameter vectors. It is started with an initial reduction $V^j$,  with generating space $R^j$. Iteratively, the parameter vector $q^m$ exhibiting the largest deviation to the full transfer function is determined,

\begin{align}
q^m = \argmax_{q \in \mathcal{D}}{\delta_q}. 
\end{align}

Subsequently, the space $R^m$ forming its Galerkin projection $V^m$ is added to the current interpolation space $\mathcal{R}^{in}$, and the current global projection $V$ is updated using a singular value decomposition

\begin{align}\label{eq_sinval}
V &= V(\mathcal{R}^{in};s) = \text{SVD}(R^1,...,R^r;s),
\end{align}

where $s$ is the order of the logarithmic singular value decay, and

\begin{align}
\mathcal{R}^{in} = \{R^i\}_{i = 1,..,r}, \quad q^i \in \mathcal{S},
\end{align}

describes the unification of the local interpolation spaces defined by

\begin{align}
R^i = R^i(\sigma^i,b^i,q^i) = [(\sigma^i_1 \idn -A(q^i))^{-1}B(q^i)b^i_1,...,(\sigma^i_{n_i} \idn -A(q^i))^{-1}B(q^i)b^i_{n_i}].
\end{align}

Since each of the local Galerkin projections $V^i = \text{orth}(R^i)$ supplied by algorithm \ref{alg:modIRK} is orthonormal by construction,  the generating spaces $R^i$ themselves are used in the generation of a global projection $V$. This avoids the problem that each space is given the same weight before entering the singular value decomposition. The procedure is repeated until the maximum local error in the transfer function with respect to all parameter values in the test set $\mathcal{D}$,

\begin{align}
\Delta^\delta = \Delta^\delta(\mathcal{D}, V) = \max_{q \in \mathcal{D}} \delta_q,
\end{align}

is smaller than the global threshold $\bar \Delta$. Using the singular value decomposition (\ref{eq_sinval}) to combine local spaces removes undesired redundancies and decreases the resulting order of the surrogate model. Still, the global projection might include modes which are not redundant but unnecessary for the approximation of the relevant time domain simulations. Therefore, it is useful to vary also the initial reduction starting the greedy selection, which was not added by the largest error criterion and thus might introduce unwanted properties of the final projection. The procedure of calculating a global Galerkin projection is summarized in algorithm \ref{alg:greedfreq}.

\begin{algorithm}
\caption{Determination of global projection matrix $V$: frequency greedy}
\label{alg:greedfreq}
\begin{algorithmic}
\REQUIRE{Evaluation points of volume flow vectors $\mathcal{D}= \{q^1,...,q^{n_\delta}\}$ , error bounds $\bar \delta$, $\bar \Delta$}
\REQUIRE{Local interpolation space $R^i = R^i(\sigma^i,b^i,q^i)$  s.t.  
$\delta_{q^i} < \bar \delta$   $\forall q^i \in \mathcal{D}$}
\ENSURE Global Galerkin projection $V$
\FOR{i=1 : $n_\delta$}
\STATE{Initialize $\mathcal{R}^{in} = R^i$, with $\; q^i \in \mathcal{D}$}
\WHILE{$\Delta^{\delta}\geq \bar \Delta$}
\STATE{determine evaluation with largest deviation $q^m = \argmax_{q \in \mathcal{Q}^\delta}\delta_q$}
\STATE{augment interpolation space $\mathcal{R}^{in} = \mathcal{R}^{in} \bigcup R^m$}
\STATE{determine global Galerkin projection $V = \text{SVD}(\mathcal{R}^{in};s) $}
\STATE{update global error $\Delta^{\delta}$}
\ENDWHILE
\ENDFOR
\RETURN{$V$}
\end{algorithmic}
\end{algorithm}

\begin{figure}
\centering
\includegraphics[scale=0.5]{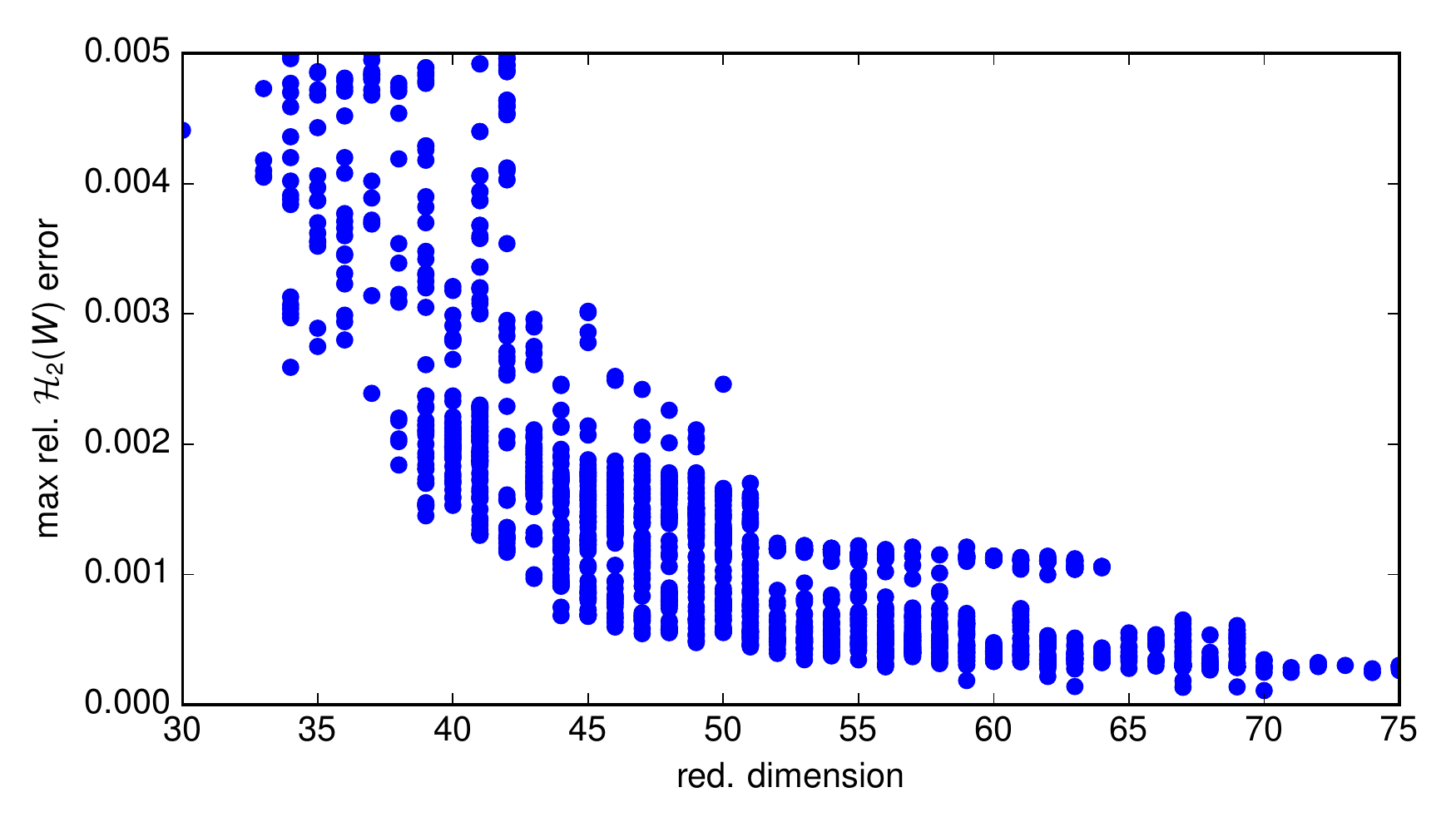}
\caption{Scatter plot of Galerkin projections resulting from algorithm \ref{alg:greedfreq} for network RNS. Visualized is the variation of the initial volume flow field $q^i$ entering the global interpolation space first, as well as the variation $s$ of the singular value decomposition.}
\label{fig_greedy}
\end{figure}

\subsection{Network decomposition}\label{sec_decomp}
To reduce large scale networks more efficiently, a decomposition into several subparts is performed \cite{cheng_reduction_2017}. To this end, the state vector $\varphi$ is decomposed to a part corresponding to a main network $\varphi^0$, and $s \in \mathbb{N}$ subnetworks collected by the state vector $\tilde \varphi$. The main network is defined as the object to which the external input energy density $u_T$ is applied. The entire system dynamics are given by

\begin{eqnarray}\label{eq_decom}
\pmat{\dot \varphi^0 \\ \dot{\tilde \varphi}} &=& \pmat{A^0(q)& \\  & \tilde A(q)} \pmat{\varphi^0\\ \tilde \varphi}  + \pmat{B^0(q) & \\  & \tilde B(q)} \pmat{u_T\\ \tilde y}\label{eq_sysnet_A} \\
\pmat{\tilde y\\y} &=&\pmat{C^0_{\tilde y} &  \\C^0_h & \tilde C} \pmat{\varphi^0\\ \tilde\varphi }\label{eq_sysnet_C} \\
0 &=& g(u_H,y,q). \label{eq_sysnet_algeb}
\end{eqnarray}

Here $\tilde y$ is the input to the subnetworks measured by energy densities of the main network. Subsequently, main- and subnetworks are reduced separately employing arbitrary, local Galerkin projections $V_i, \; i \in [0,..,s]$. Note that by eliminating $\tilde y$ in (\ref{eq_decom}), the block diagonal system matrix $A$ obtains a contribution $\tilde B C^0_{\tilde y}$ in the lower left block, coupling the diagonal blocks. The resulting system matrix is identical to the formulation (\ref{eq_syspar}), for which section \ref{sec:stab} proved Lyapunov stability. Hence, this property is preserved when reducing different networks separately. A key saving of computational complexity introduced by the decomposition arises in the assembly of the parametric operator $A(q)$. When reducing the entire network by projection $V$, the formation $A^r(q)$ requires $n_q$ multiplications of the scalar $q_i$ with the matrix $A^r_i$. In contrast, the reduced system matrix of the decomposed network preserves the block structure, and only the local volumeflows in the subnetwork are applied to the system matrix $A^i\; \in [0,..,s]$ of the subnetwork.

\section{Numerical analysis of the surrogate model}\label{sec_numerics}
In this section, we study the approximation quality and the runtime of the reduced model presented in section \ref{sec_buildROM} for different input scenarios and different real world networks. In the benchmark, both the time integration as well as higher order hyperbolic schemes are taken into account. Furthermore, different approximation qualities defined by the number of finite volume cells are studied. Computations presented in this section are performed employing MATLAB(R) R2016b on an Intel(R) XEON(R) CPU E5-2670 processor @ 2.60GHz. The number of finite volume cells is distributed to pipelines according to

\begin{align}\label{eq_cells}
n_i = \max\lbrace{n_\text{min},\text{round} \left( c_r \frac{L_i}{L_r}\frac{v_r}{v_i} \right) \rbrace} \; \forall i \in \mathcal{E},
\end{align}

where $L_r,v_r,c_r$ denote length, velocity and the expected number of cells of the reference pipeline. Eq. (\ref{eq_cells}) aims at minimizing the variance of the CFL rations on each pipeline. The reference pipeline is thus the one with the largest CFL condition, where its velocity is obtained from a simulation employing typical inputs. The considered error in time domain is defined in (\ref{eq_err_time}).\\

The first example represents a street with 32 consumers and one loop forming 33 independent volume flows (parameters), \fig\ref{fig_net}. We fix the consumption of the houses to typical average values constant in time. For the training phase the input signal $u_{in} \in \mathcal{U_T}$ is used,

\begin{align}\label{eq:siginsamp}
u_{in}(t) = 0.4 + 0.2 \cos(wt), \quad \frac{2\pi}{\omega} = 14 \times 10^3s
\end{align}

spanning the allowed range for the energy density. This input signal is referred to as the in-sample signal. The Galerkin reduction resulting from algorithm \ref{alg:greedfreq} is further tested for the (out-of-sample) signal $u_{out} \in \mathcal{U_T}$

\begin{align}\label{eq:sigoutsamp}
u_{out}(t) =c_0 + \sum_{i=1}^2 [ c_i\cos(i \omega t) + s_i\sin(i \omega t)], \quad \frac{2\pi}{\omega} = 28 \times 10^3s, 
\end{align}

with the coefficients $(c_0,c_1,c_2)=(0.37,-0.078,0.089), \; (s_1,s_2) = (c_1,-c_1) $.\\

\begin{figure}
\begin{minipage}[t]{0.49\textwidth}
\includegraphics[width=\textwidth]{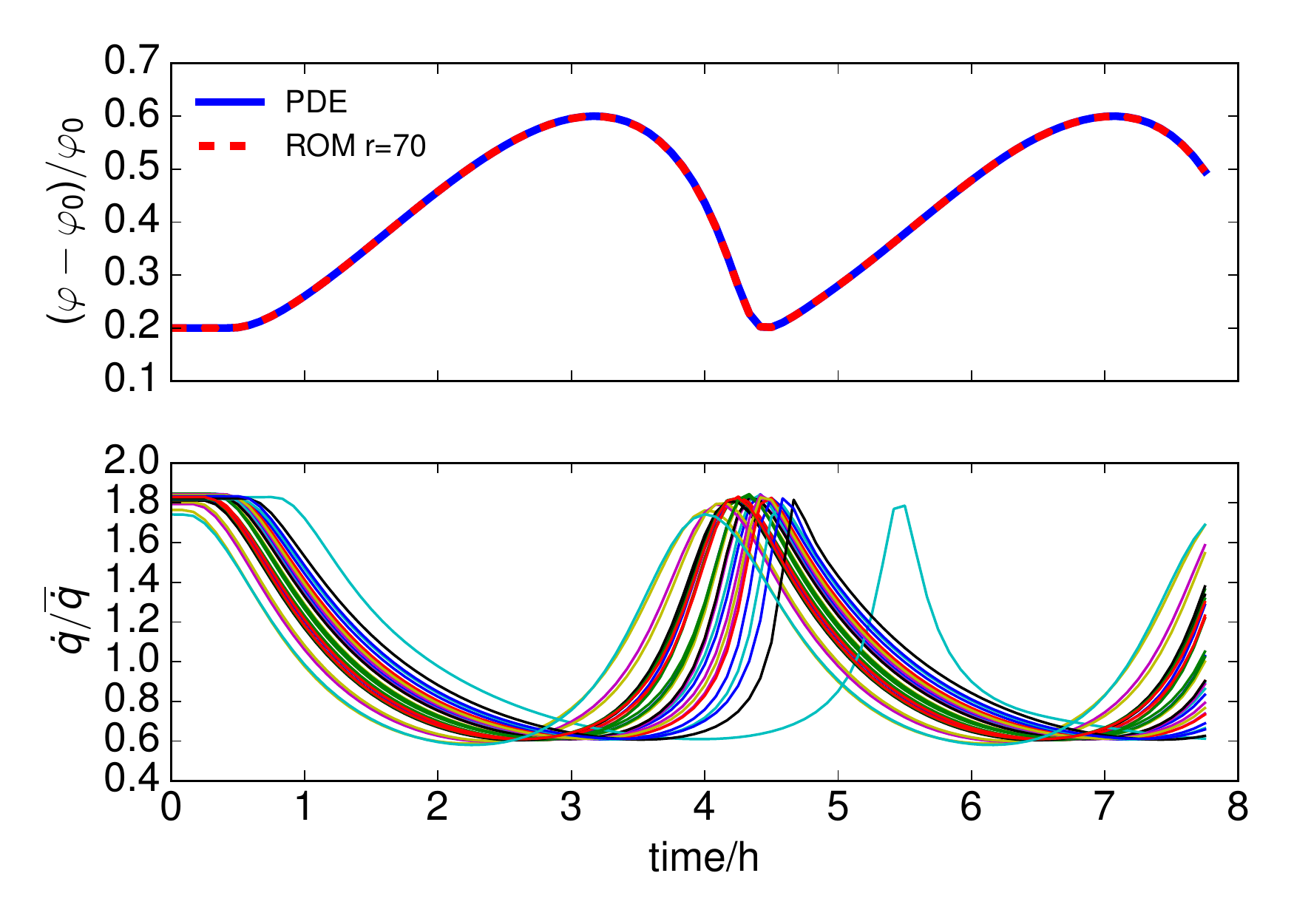}
\begin{picture}(0,0)
\put(10,170){(a)}
\end{picture}
\end{minipage}
\begin{minipage}[t]{0.49\textwidth}
\includegraphics[width=\textwidth]{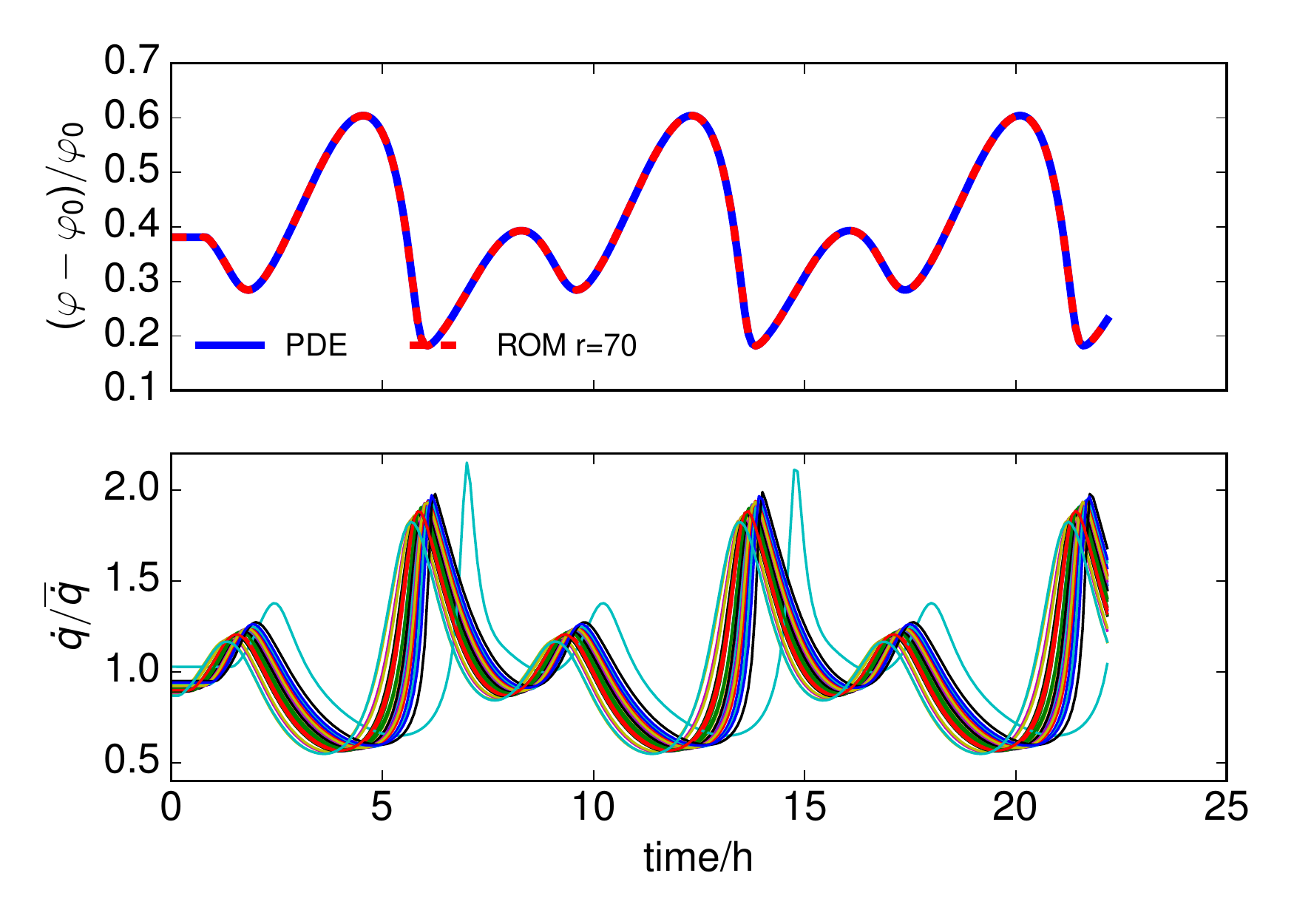}
\begin{picture}(0,0)
\put(-10,170){(b)}
\end{picture}
\end{minipage}
\caption{Comparison of the PDE solution and the reduced scheme with $r=70$ states for the street network RNS for the in-sample signal(a), and the out-of-sample signal(b). The lower part of both plots shows the normalized temporal variation of volume flows.}
\label{fig_street_sign}
\end{figure}

\begin{figure}
\begin{minipage}[t]{0.49\textwidth}
\includegraphics[width=\textwidth]{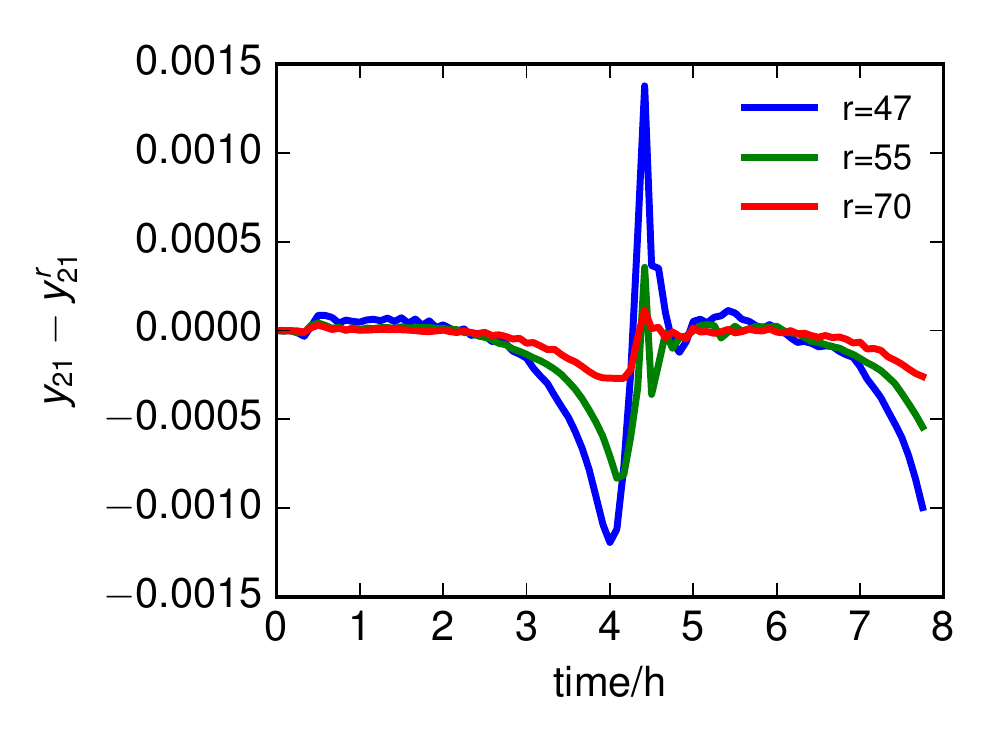}
\begin{picture}(0,0)
\put(10,170){(a)}
\end{picture}
\end{minipage}
\begin{minipage}[t]{0.49\textwidth}
\includegraphics[width=\textwidth]{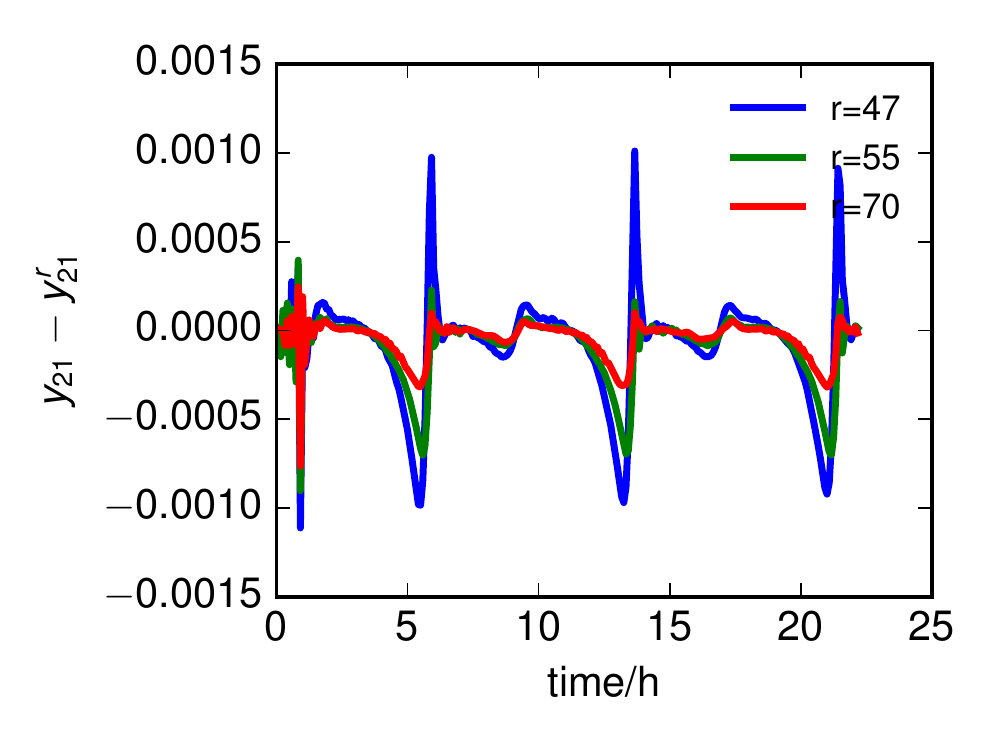}
\begin{picture}(0,0)
\put(-10,170){(b)}
\end{picture}
\end{minipage}
\caption{Absolute error of output 21 in network RNS showing the largest distance from source to output. The size of the ROM is altered comparing the in-sample input signal(a) to the out-of-sample signal(b).}
\label{fig_street_err}
\end{figure}

\begin{figure}
\begin{minipage}[t]{0.49\textwidth}
\includegraphics[width=\textwidth]{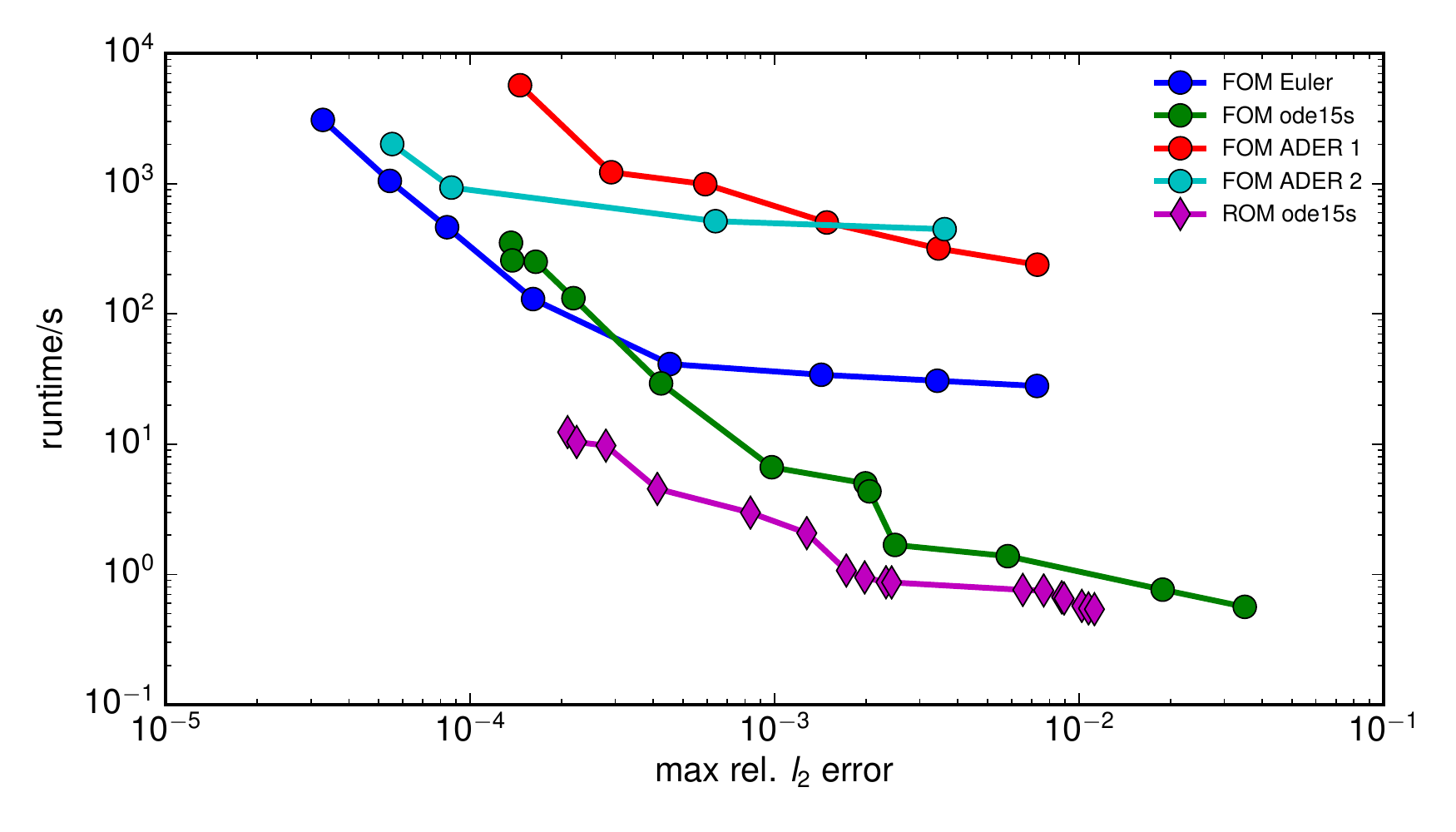}
\begin{picture}(0,0)
\put(10,135){(a)}
\end{picture}
\end{minipage}
\begin{minipage}[t]{0.49\textwidth}
\includegraphics[width=\textwidth]{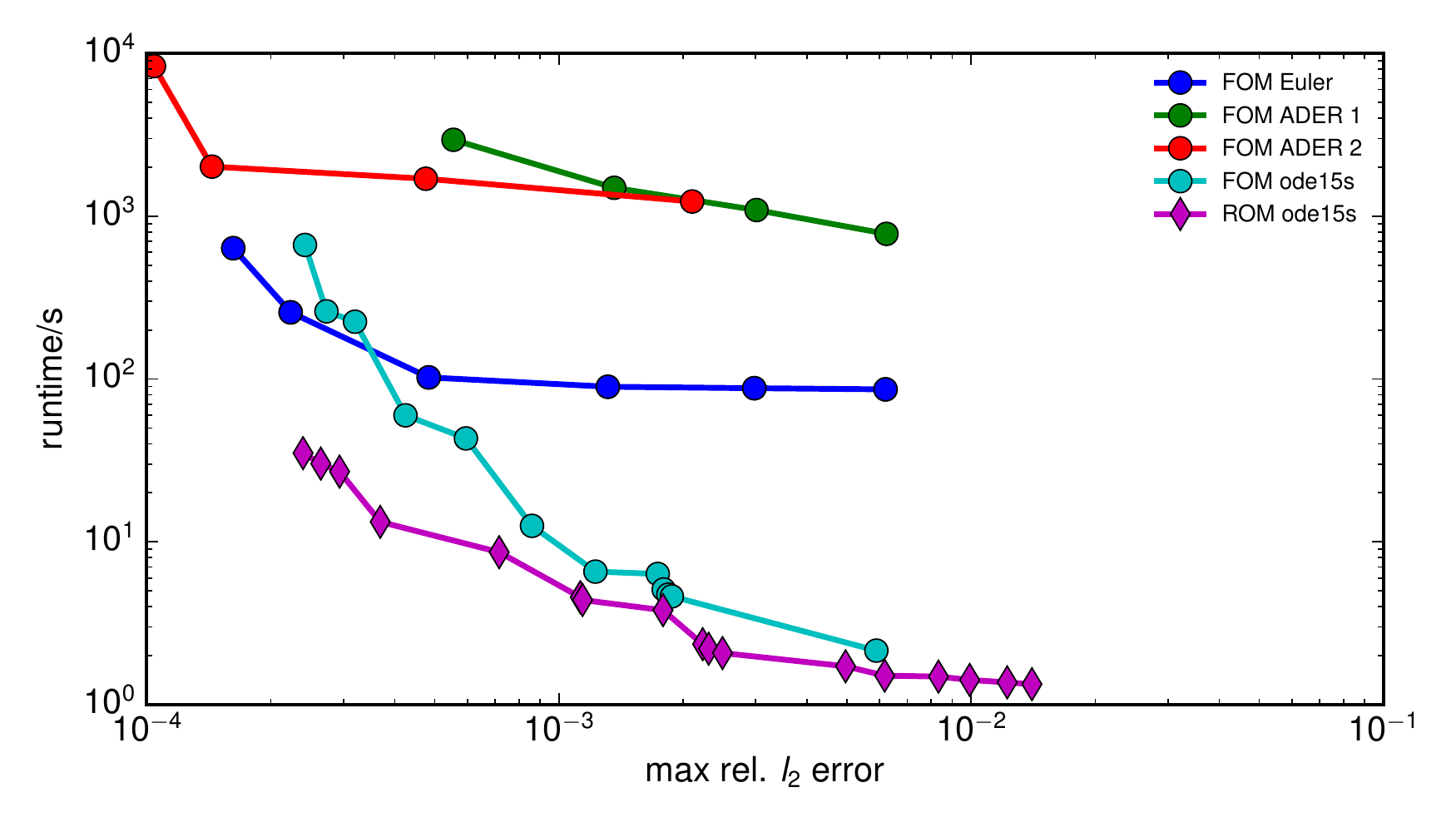}
\begin{picture}(0,0)
\put(-10,135){(b)}
\end{picture}
\end{minipage}
\caption{Runtime versus time domain error $\Delta^T$ for the reference network RNS varying both time- and space discretizations. ode15s denotes the MATLAB(R) implicit ODE solver.}
\label{fig_street_integ}
\end{figure}

\fig\ref{fig_street_sign} a), b) show the PDE solution for the two input signals (\ref{eq:siginsamp}, \ref{eq:sigoutsamp}) and the output $y$ of the ROM exhibiting $r=70$ states. The original cosine wave is inclined asymmetrically due the time dependent transport velocity. The ROM is able to precisely reflect the nonlinear dynamics of the street network for both in-sample and out-of-sample signals. The estimate for the PDE solution is obtained by extrapolation of simulation results with increasingly fine spatial resolution. The reference model used to this end is the Upwind scheme. The initial state for all simulations equals the equilibrium state defined by the initial control, $\varphi(x,t=0)=u(t=0), \; \forall x$. As a consequence, both input signals $u_{in}, u_{out}$ are continuous in time, while $u_{in}$ is in addition differentiable, since $\dot u_{in} = 0$.\\ 

Employing an upwind discretization of the PDE, the number of finite volume cells is increased to compare the development of runtime, approximation error and size of full and reduced models. To further investigate the benefits of the ROM for different resolutions of the spatial discretization, the number finite volume cells is varied and different time integration schemes are applied to both full and reduced models, cf. \fig\ref{fig_street_integ}. More precisely, FOM Euler (FOM ode15s) denotes an upwind discretization of the full order model, where the time integration is performed using the explicit  Euler scheme (the implicit, adaptive Runge-Kutta-method ode15s supplied by MATLAB(R)). Additionally, FOM ADER 1(2) denote ADER schemes of order 1(2) for hyperbolic conservation laws, which are integrated by an explicit Euler scheme. Finally ROM ode15s is the reduced order model of the upwind discretization in space, integrated in time using ode15s.\\ 

Going from higher to smaller approximation errors, two regimes are visible. In the first one,  the numerical transport velocities reflected by the CFL number , $\lambda_i = n_i  v_i/L_i, \; i \in \mathcal{P}$ synchronize over all pipelines.  Here, explicit time integration schemes whose timesteps are restricted by $\hat \lambda = \max _{i \in \mathcal{P}}\lambda_i$ show a high computational effort compared to implicit time integration schemes for higher errors. This effect aggravates for ADER \cite{borsche_high_2016} schemes of order $o$ since they require a minimum number of $o$ cells, further diminishing the maximum step size. Increasing the number of finite volume cells, $\hat \lambda$ remains constant in the synchronization phase and explicit timestepping models improve in accuracy without larger additional costs. Towards the end of the synchronization, $\hat \lambda$ increases in the number of cells $n$ and the computational costs increase superlinear in $n$. The implicit time integration of the Upwind scheme is in particular fast in the synchronizing regime in which the problem is stiff. With increasing number of cells, the stiffness of the problem reduces, and the benefits of the full order implicit time integration vanish as well. Comparing the implicit time integration for full and reduced order models, the speed-up increases towards finer resolutions. As a consequence, the reduced model shows the smallest runtime within the considered error range. This range is suitable for optimal control purposes, in which an exact solution is not the central objective, but a fast estimate of it.\\

In the second example we consider a larger network with $n_h=333$ consumers, $n_p = 775 $ pipelines and $n_L=6$ loop flows, which represents the network topology of an existing district, cf. \fig\ref{fig_net} b). Due to multiple loops in the left part of the network (highlighted in green), changes in the fluxdirection occur for the chosen control. The latter modifies the coupling structure of the finite volume cells in the reverting pipelines, since the external cells delivering inputs change with the inflow boundary. The corresponding system matrix $A(q)$ changes its sparsity structure, in contrast to the case of constant flux direction, where a change of volume flow modifies the relative weights of the entries. When generating projections for this network incorporating these nonlinearities, a prohibitively large reduced dimension results. Hence, the network is decomposed into subnetworks, cf. section \ref{sec_decomp}. To this end, the subnetwork exhibiting all flux changes is separated and as the only one not reduced. For the remaining subparts, algorithm \ref{alg:greedfreq} is applied. Concerning the dynamics, changes in the flux direction introduce a highly nonlinear response of the system, compare \fig\ref{fig_neub_integ} (a). Still, the output of the ROM in all subnetworks is accurate enough to recover the complex dynamics in the flux changing subnetwork. Focusing on the runtime comparison for different accuracies, the same observations as for the street network hold true. Furthermore the speed-up of the reduced order model is shifted towards smaller relative errors.\\

\begin{figure}
\begin{minipage}[t]{0.49\textwidth}
\includegraphics[width=\textwidth]{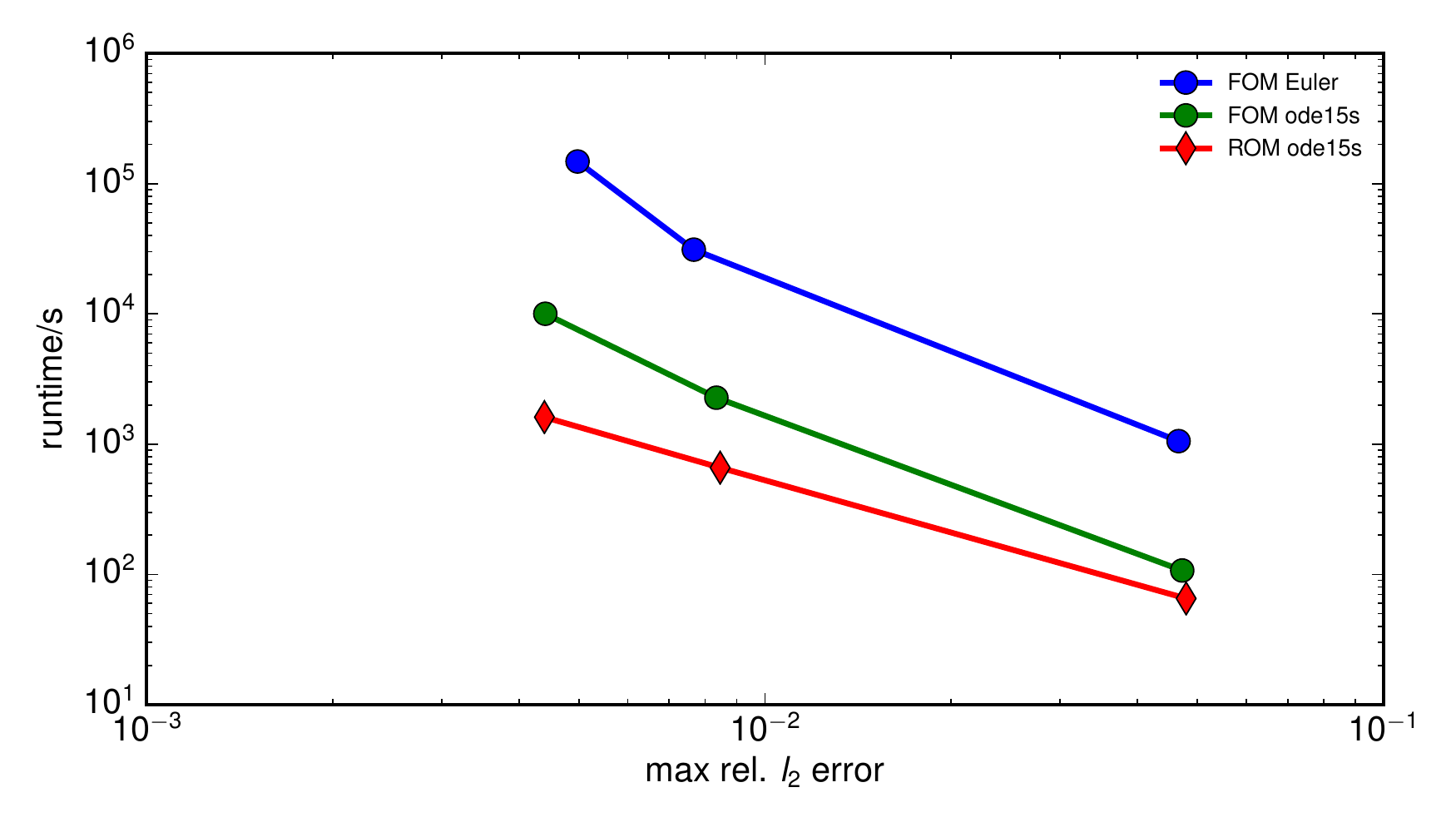}
\begin{picture}(0,0)
\put(10,140){(a)}
\end{picture}
\end{minipage}
\begin{minipage}[t]{0.49\textwidth}
\includegraphics[width=\textwidth]{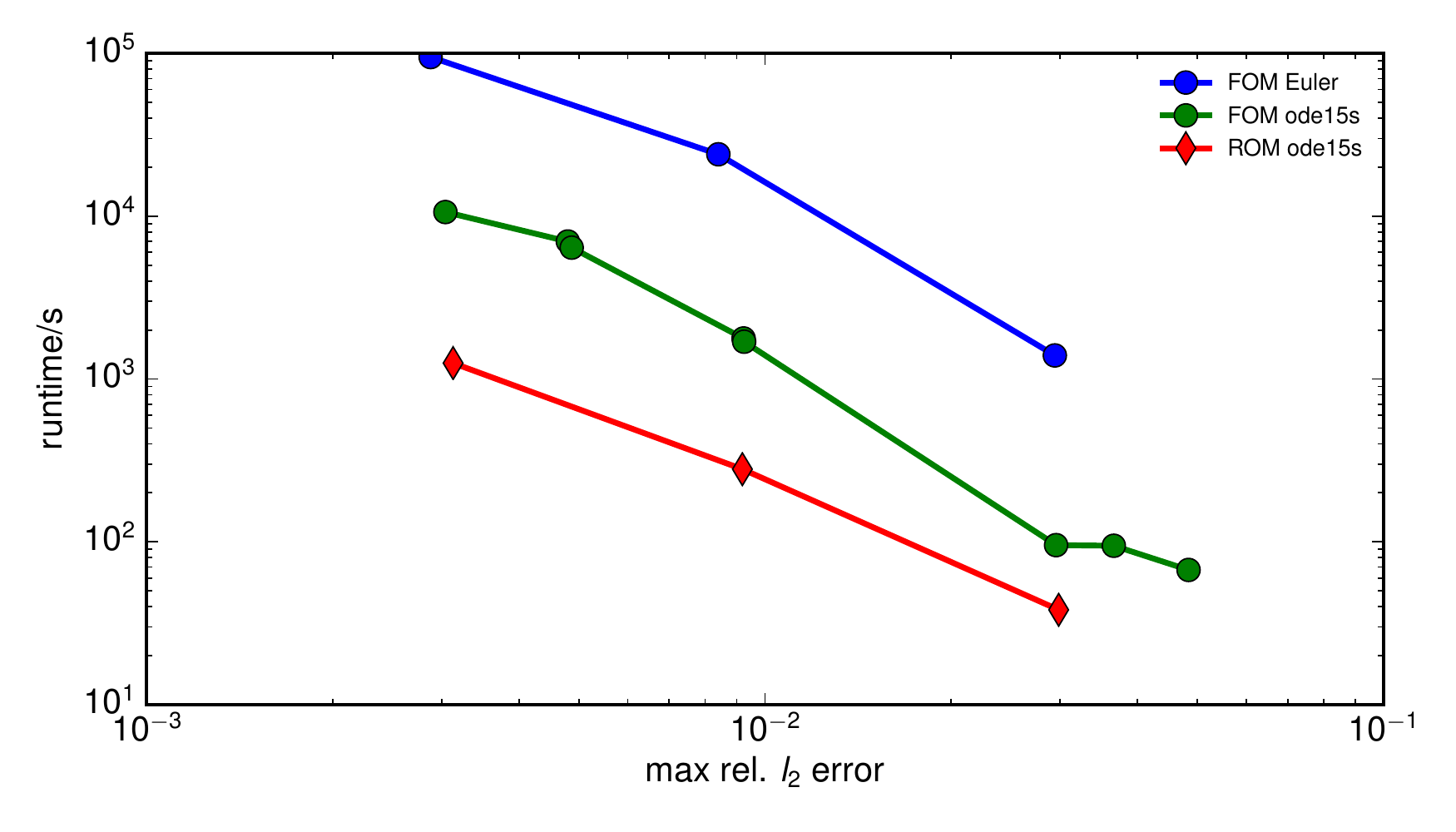}
\begin{picture}(0,0)
\put(-10,140){(b)}
\end{picture}
\end{minipage}
\caption{Runtime versus time domain error $\Delta^T$ for the reference network RND varying both time- and space discretizations. ode15s denotes the MATLAB(R) implicit ODE solver.}
\label{fig_neub_integ}
\end{figure}

\begin{figure}
\begin{minipage}[t]{0.49\textwidth}
\includegraphics[width=\textwidth]{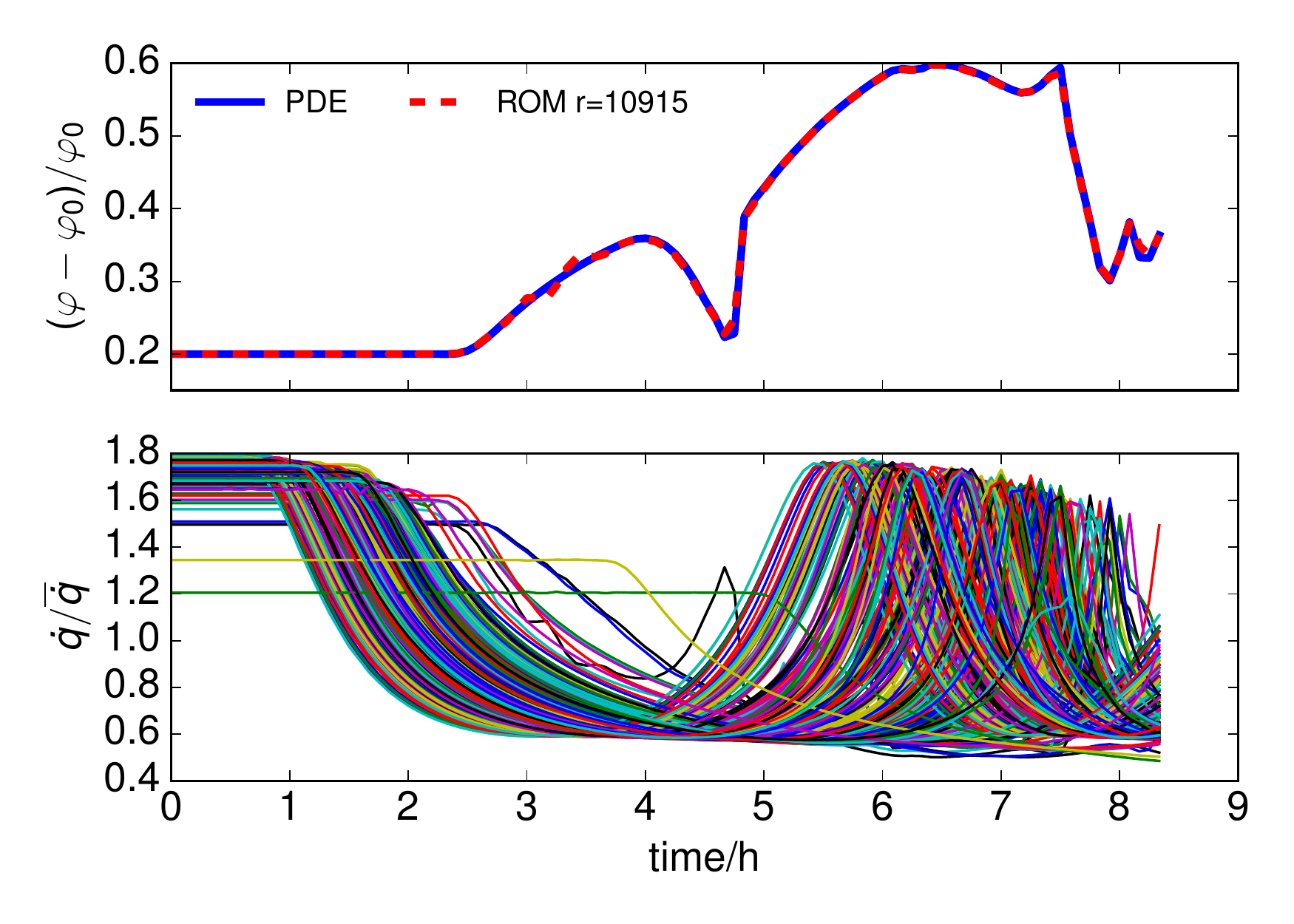}
\begin{picture}(0,0)
\put(10,170){(a)}
\end{picture}
\end{minipage}
\begin{minipage}[t]{0.49\textwidth}
\includegraphics[width=\textwidth]{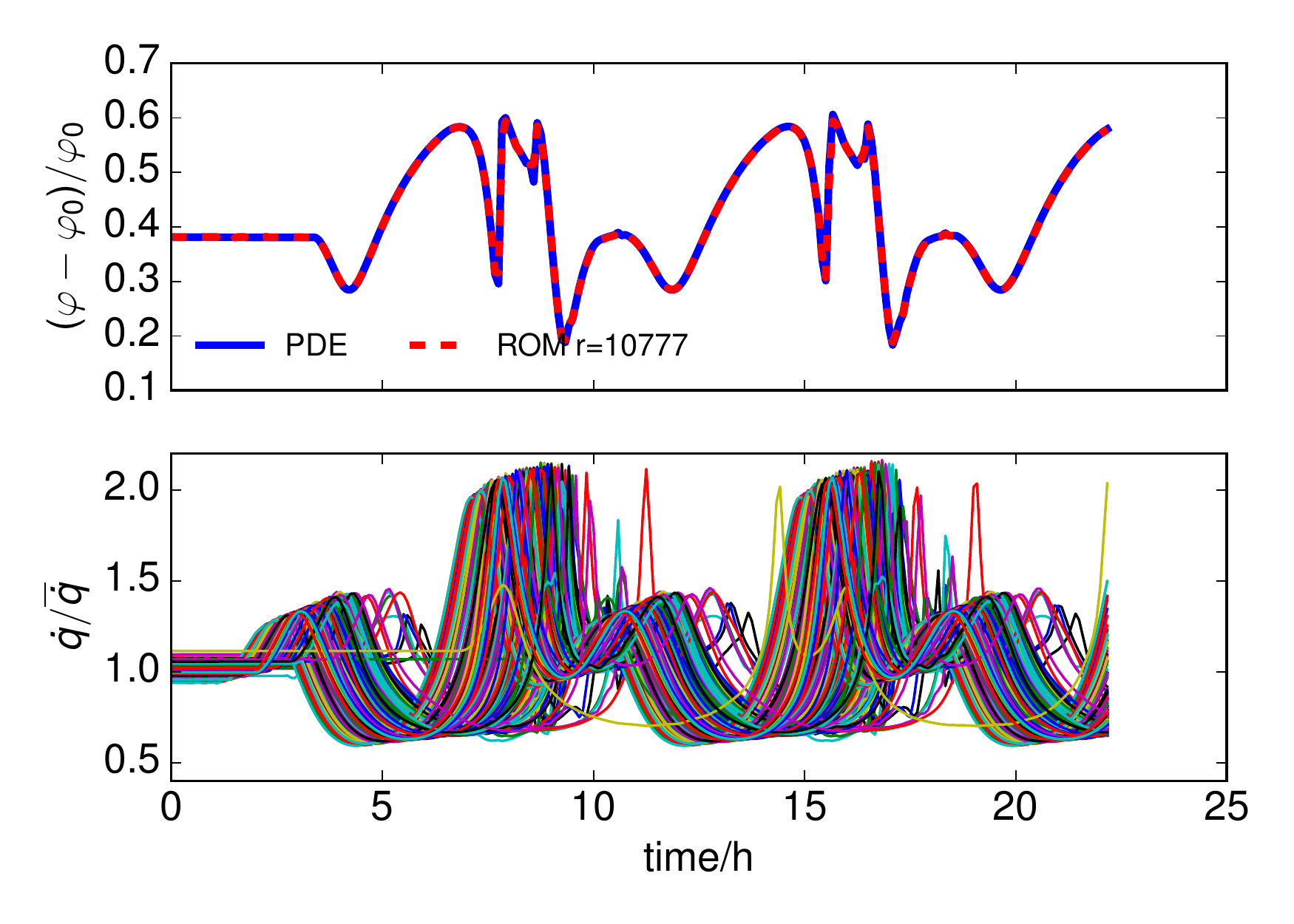}
\begin{picture}(0,0)
\put(-10,170){(b)}
\end{picture}
\end{minipage}
\caption{Comparison of the PDE solution and the reduced scheme for the district network RND for the in-sample input(a), and the out-of-sample signal(b). The lower part of both plots shows the temporal variation of volume flows.}
\label{fig_neub_expsim}
\end{figure}

\begin{figure}
\begin{minipage}[t]{0.49\textwidth}
\includegraphics[width=\textwidth]{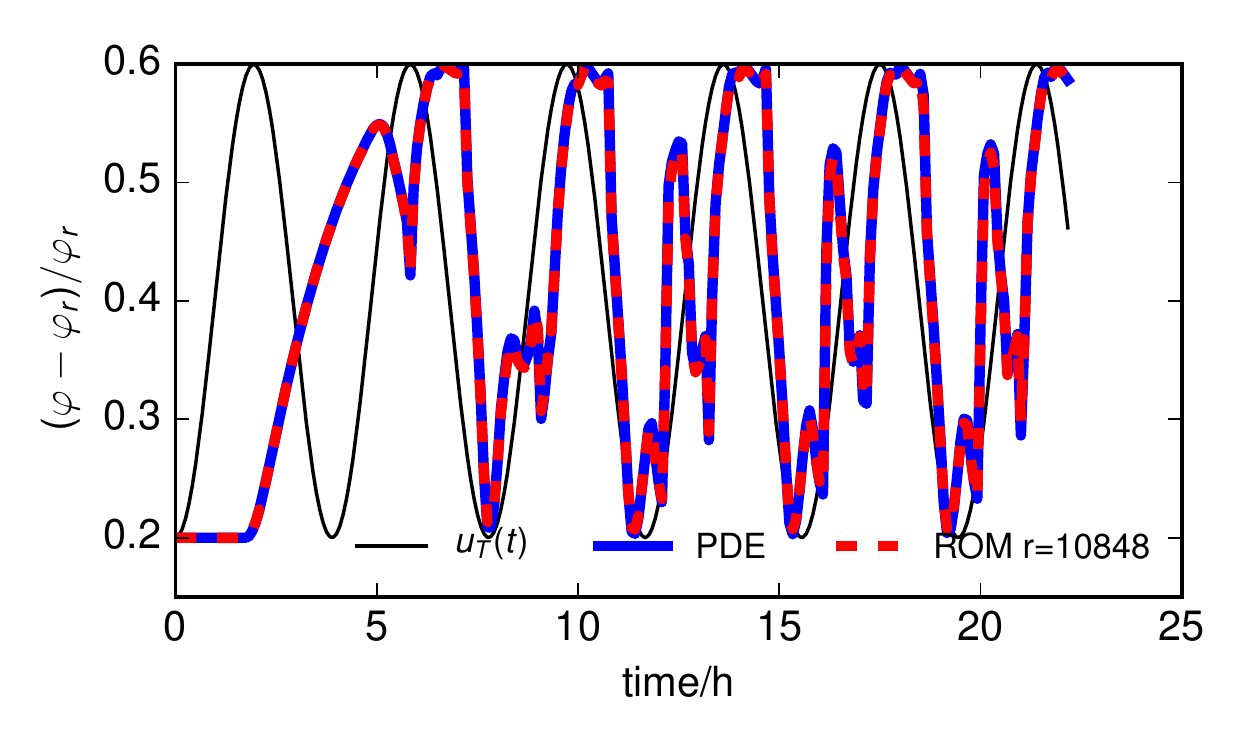}
\begin{picture}(0,0)
\put(10,170){(a)}
\end{picture}
\end{minipage}
\begin{minipage}[t]{0.49\textwidth}
\includegraphics[width=\textwidth]{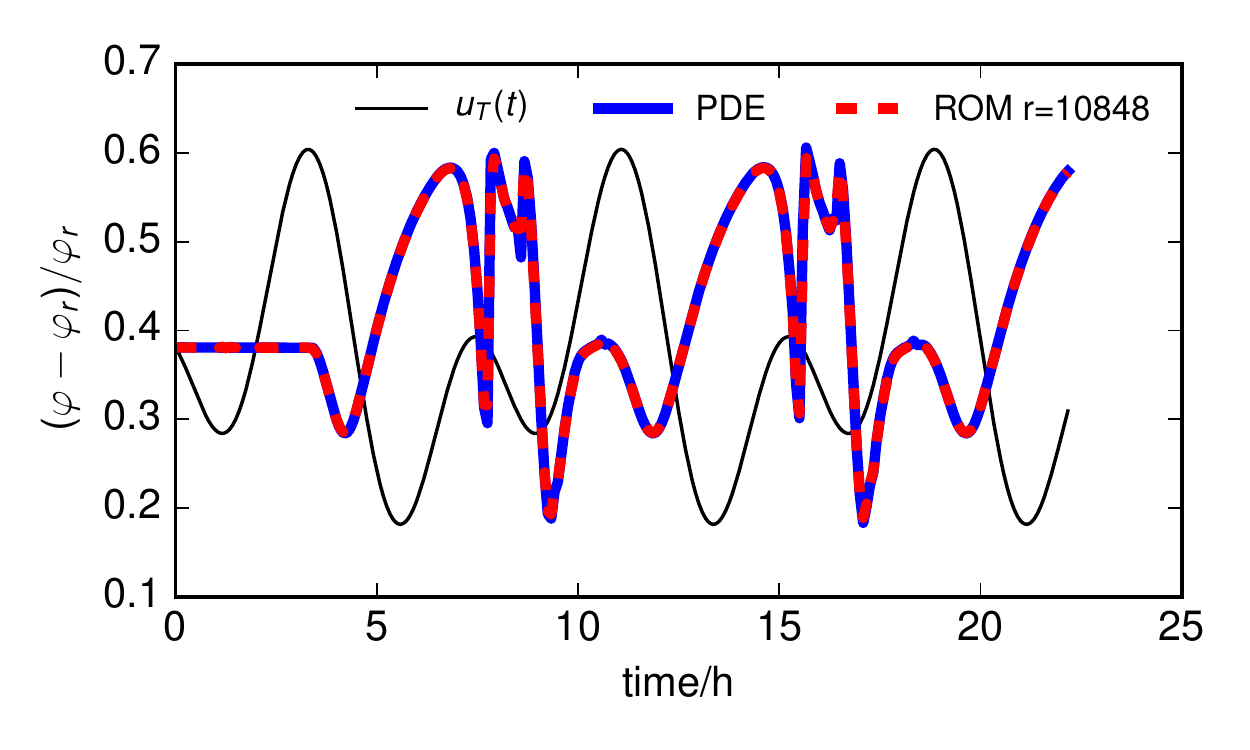}
\begin{picture}(0,0)
\put(-10,170){(b)}
\end{picture}
\end{minipage}
\caption{Relative energy density versus runtime in the network RND. The displayed signals are the input, the PDE solution, and the reduced scheme for the in-sample signal(a), and the out-of-sample signal(b).}
\label{fig_neub_input}
\end{figure}

\begin{figure}
\begin{minipage}[t]{0.4\textwidth}
\includegraphics[width=\textwidth]{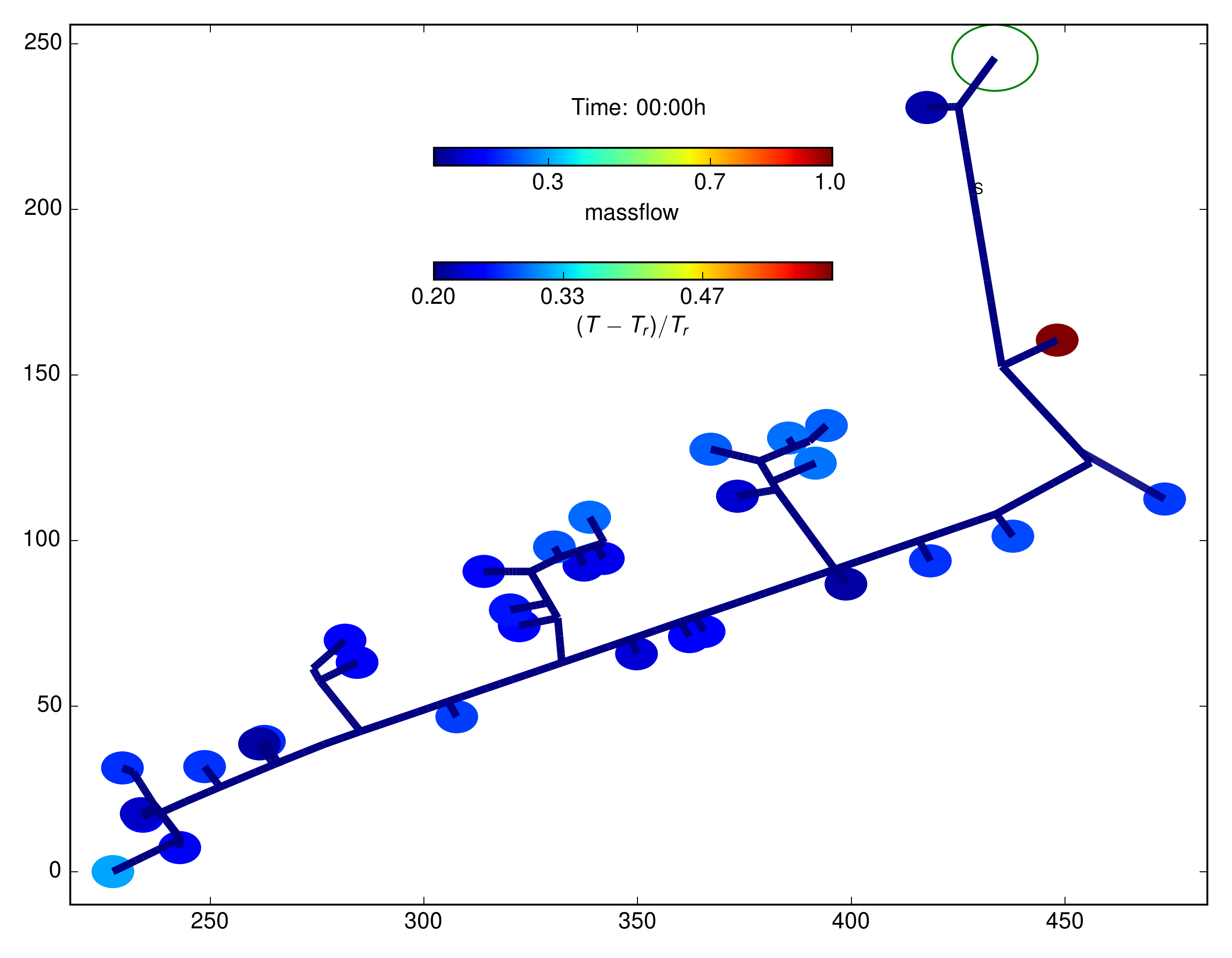}
\begin{picture}(0,0)
\put(20,130){(a)}
\end{picture}
\end{minipage}
\begin{minipage}[t]{0.6\textwidth}
\includegraphics[width=\textwidth]{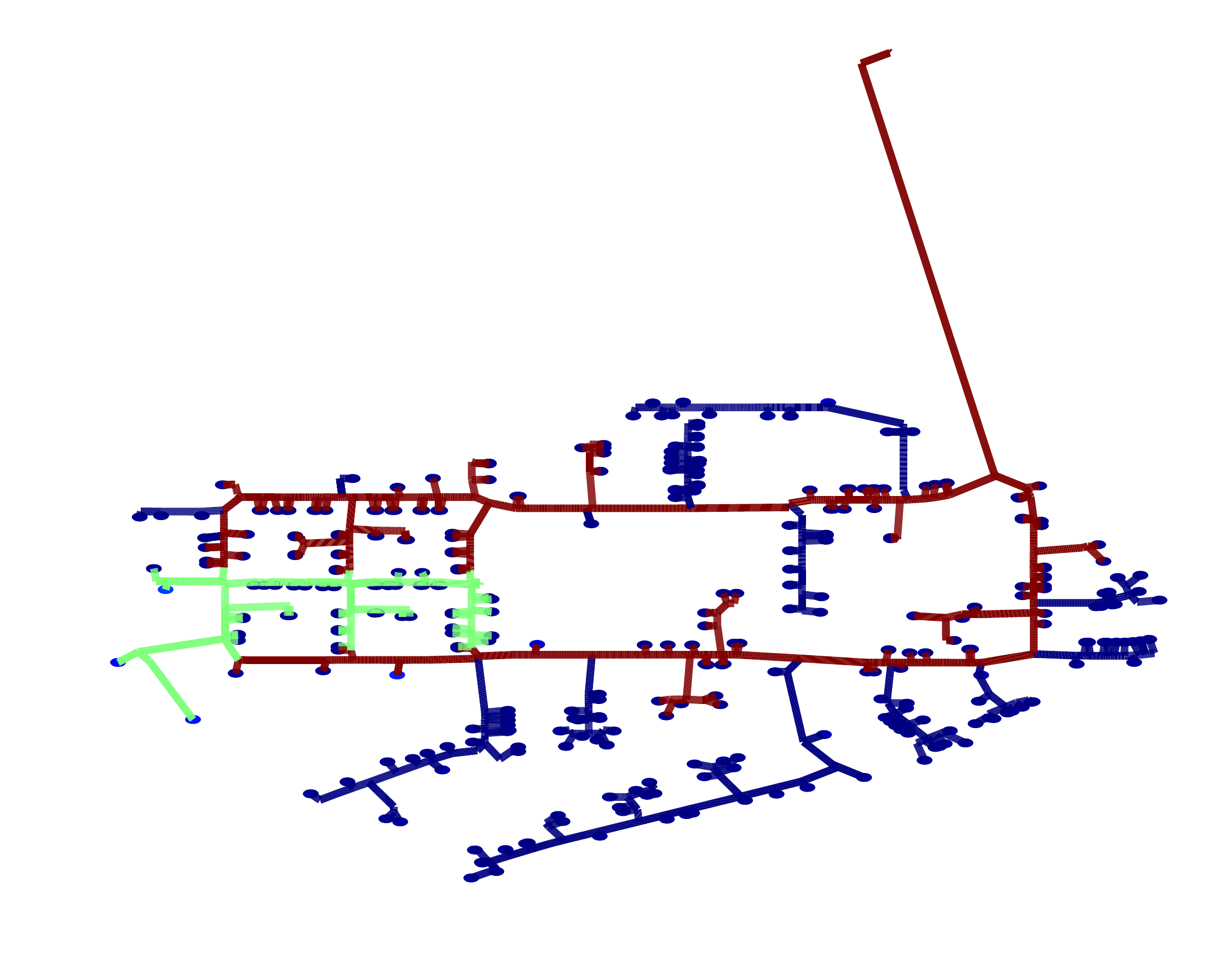}
\begin{picture}(0,0)
\put(20,130){(b)}
\end{picture}
\end{minipage}
\caption{(a) Reference network "RNS" representing a street with 32 consumers, 81 pipelines and 1 loop, modeling a typical street. (b) Reference network "RND" with 333 consumers, 775 pipelines and 6 loops modeling an existing network for a city district. Colors show the decomposition of the network to main network (red), subnetworks (blue) and the subnetwork exhibiting changes of the flux direction (green).}
\label{fig_net}
\end{figure}

\section{Conclusions}
\label{sec:conclusions}
In this paper we presented a route to efficiently generate stable and fast surrogate models for the simulation of advection dominated hyperbolic DAEs at the example of district heating networks. By conserving the algebraic part of the DAE, Lyapunov stability can be shown and the construction of the corresponding global energy matrix is presented. By constructing a Galerkin projection in the offline phase based on frequency space samples, the reduced model is both stable and generically applicable for different admissible inputs. The resulting surrogate model allows for a speed up of up to one order of magnitude compared to full order hyperbolic schemes in the error range of interest. It will be interesting to further check the effectiveness of the model towards the use in optimal control. Furthermore, we will study the modification of size and runtime of the surrogate, when refining the model. Possible refinements are a sink term in the energy transport, acceleration in the conservation of momentum, and a more precise model for frictional processes within the pipelines.

\section*{Acknowledgments}
We acknowledge the financial support by the Federal Ministry of Education and Research of Germany in the framework of the project \textit{"Verbundprojekt im BMBF-Programm "Mathematik f\"ur Innovationen" - EiFer: Energieeffizienz durch intelligente Fernw\"armenetze"} (F\"orderkennzeichen: 05M18AMB).\\


\end{document}